\newcommand{\R}{\mathbb{R}}
\newcommand{\C}{\mathbb{C}}
\newcommand{\N}{\mathbb{N}}
\begin{document}
\title{Robustness of Exponential Stability of a Class of
	Switched Linear Systems with State Delays\protect\thanks{This is an example for title footnote.}}

\author[1]{Nguyen Khoa Son*}

\author[2]{Le Van Ngoc}
\authormark{N.K.Son \& L.V.Ngoc }

\address[1]{Institute of Mathematics,
	Vietnam Academy of Science and Technology, 18 Hoang Quoc Viet Rd., Hanoi, Vietnam: nkson@vast.vn}%
\address[2]{Department of Scientific Fundamentals, Posts and Telecommunications Institute of Technology, Km10 Nguyen Trai Rd, Hanoi, Vietnam:ngoclv@ptit.edu.vn or lvngocmath@gmail.com}

\corres{*Nguyen Khoa Son, Institute of Mathematics,
	Vietnam Academy of Science and Technology, 18 Hoang Quoc Viet Rd., Hanoi, Vietnam. \email{nkson@vast.vn}}


\abstract[Summary]{
	This paper investigates the robustness of exponential stability of a class of switched systems described by linear functional differential equations under arbitrary switching. We will measure the stability robustness of such a system, subject to parameter affine perturbations of its constituent subsystems matrices, by introducing the notion of structured stability radius. The lower bounds and the upper bounds for this radius are established under the assumption that certain associated positive linear systems upper bounding the given subsystems have a common linear copositive Lyapunov function. In the case of switched  positive linear systems with discrete multiple delays or distributed delay the obtained results yield some tractably computable bounds for the stability radius. Examples are given to illustrate the proposed method.}

\keywords{switched systems,  time-delay systems, robustness of stability, stability radius, structured perturbations.}

\maketitle


\section{INTRODUCTION}\label{sec1}
The {\it stability robustness} of dynamical systems subject to parameter perturbations or uncertainties has attracted significant attention from  researchers for many years. The motivation comes from  engineering practice. In order to study or control a real system with complicated dynamic behavior the engineer usually considers a simplified mathematical model which is called a nominal system. Then there arises the question of whether a desired property established for the nominal system, say asymptotic stability or controllability, is robust enough to be true when applied to the real system. Since a mathematical model never exactly represents the dynamics of a physical system, the robustness issue is not only important in the context of model reduction but is a fundamental problem for the application of systems and control theory in general.
 
 One of the most effective approaches in measuring the system stability robustness is based on the concept of {\it stability radius} which was introduced and analyzed in the seminal  works \cite{ H-P2, Qiu} for  the linear systems in $\R^n$ of the form
$\dot x(t)=A_0 x(t), \ t\geq 0,	$
which represents the unperturbed asymptotically stable nominal system (i.e. the zeros of  the characteristic polynomial $\det (sI-A_0)=0$ are contained in $\C_- $ -the open left-half complex plane). The stability radius of the system is defined as the \textit{ maximal number} $\delta_0>0$ such that all the perturbed systems $\dot x(t)= \widetilde A_0 x(t)= (A_0+D\Delta E)x(t)$ are asymptotically stable whenever $\|\Delta\| <\delta_0$ (with some matrix norm $\|\cdot\|$)  where $D$ and $E$ are given real  matrices of appropriate sizes and $\Delta$ is an unknown perturbation matrix. Depending on perturbation matrix $\Delta$  being real or complex one gets, correspondingly, the real or complex stability radii, which are basically different \cite{H-P-differences}. Geometrically, when $D$ and $E$ are the identity matrix, the stability radius is just the distance to instability of an asymptotically stable nominal system and the calculation of this radius is reduced to a nonconvex optimization problem (since the set of unstable systems is not convex).  Problems of characterizing and calculating  stability radii for different classes of dynamical systems and subject to different types of perturbations or uncertainties have been a topic of major interest in  the system and control theory over the past several decades. The interested reader is referred to the survey article \cite{H-P-survey} and the monograph \cite{H-P} where an extensive literature on this study can be found; see also \cite{Hin_Son_IJNRC98} for discrete-time systems and \cite{Son-Hinrichsen96} for \textit{positive systems}, where it has been show that, for this class of systems, the real and the complex stability radii coincide and can be calculated by a simple formula. Note that a  dynamical system with state space $\R^n$ is called positive if any trajectory of the system starting at an initial state in the positive orthant $\R^n_ +$  remains forever in $\R^n_+$. This class of systems is used in many areas such as economics, populations dynamics and ecology, see, e.g. \cite{Farina}. The mathematical
theory of positive systems is based on the theory of nonnegative matrices \cite{Berman}, where the famous Perron-Frobenius Theorem plays an essential role, particularly in the system stability analysis. 

 Further, the stability radius problem has been considered intensively also for time-delay systems where the system's equation depends not only on the present but also on the past state. For instance, under the assumption that the linear system with discrete delays of the form
$ \dot x(t)= A_0 x(t) + \sum_{i=1}^m A_ix(t-h_i),\ t\geq 0, \ h_i>0$ 
 is asymptotically stable (or, equivalently, the zeros of the characteristic quasi-polynomial $\det(sI-A_0-\sum_{i=1}^mA_ie^{-sh_i})=0$ are contained in $\C_-$, see e.g. \cite{Hale}) and subjected to the {\it structured affine perturbations} $A_i \rightarrow \widetilde A_i= A_i+D_i\Delta_iE_i, \ i=0,1,\ldots, m$ where  $D_i,E_i, i=0,1,\ldots,m$ are given structuring matrices and $\Delta_i, i=0,1,\ldots, m$ are unknown perturbations, a number of results on analysis and computation of the system's stability radius was obtained  in \cite{Son_Ngoc_Acta,Hu_Davison,Ngoc_Son_NFAO2004,Michiels_Friedman2009, Michiels_Niculescu2014,Borgioli2019}. Some extensions have been done in \cite{Son_Ngoc2001,Ngoc_Son_SIAM} for a more general class of systems described by linear functional differential equations (or FDEs, for short) of the form
\begin{equation}\label{delay2}
\dot x(t)= A_0x(t) + Lx_t, \ t\geq 0,	
	\end{equation}
where $x_t(\theta):= x(t+\theta)$ is a continuous function of $\theta\in [-h,0]$  and $L: C([-h,0],\R^n)\rightarrow \R^n$ is a linear bounded operator, which covers, as particular cases, the aforementioned linear systems with discrete delays and those with the distributed delays. Also, it has been shown that for the classes of positive delay linear systems the system's real stability radius can be calculated directly via a simple formula expressed in terms of the systems's matrices, see e.g. \cite{Son-Hinrichsen96, Hin_Son_IJNRC98,Son_Ngoc2001, anh_son_RNC2009}. 

Given the mentioned widespread popularity of stability radii in the robustness stability analysis, it is surprising that this approach has not been developed so far in the literature on {\it stability of switched systems}.  We recall that a switched system is a type of hybrid dynamic systems which consists of a family of subsystems and a rule called a switching signal that chooses an active subsystem from the family at every instant of time. In the simplest continuous-time model, a linear switched system can be represented in the form
\begin{equation} \label{SwLS}
\dot{x}(t)=A_{\sigma(t)}x(t),\  t\geq 0,\  \sigma \in \Sigma_+,
\end{equation} 
where $\Sigma_+$ is a set of piece-wise constant functions $\sigma: [0,+\infty) \rightarrow \underline N:=\{1,2,\ldots,N\}$ (satisfying some realistic assumptions) which defines a switching between given constituent subsystems 
$	\dot{x}(t)=A_kx(t),\   t\geq 0,\ k\in \underline N.$
The study on switched systems has drawn considerable  attention in system and control community  over the past few decades due to  wide range of applications of this type of systems in engineering practice. The reader is referred to monographs \cite{Liberzon}, \cite{Sun},  survey papers \cite{Lin,shorten2007} and the references therein for more details on different problems regarding switched systems, in particular, on stability issues. It has been indicated, for instance,   that  the  switched linear system \eqref{SwLS} is exponentially stable {\it under arbitrary switching} $\sigma$   if all constituent subsystems have a common  quadratic Lyapunov function (QLF). The last condition, however, is not necessary for exponential stability of \eqref{SwLS} and many efforts have been made to reduce  this conservatism, by looking for more general types of Lyapunov functions. Recently, similar problems have been  considered intensively also for time-delay switched systems, where different kinds of the so-called Lyapunov - Krasovskii functionals are  playing a similar role  (see, e.g. \cite{Kim, Wu, Meng2017}).  In the meantime, for the class of  positive or compartmental  switched linear systems, besides traditional quadratic Lyapunov functions, a more restrictive notion of linear copositive Lyapunov functions (LCLF) is exploited effectively in the study of stability problems (see e.g.  \cite{ Chelaboina2004, Mason2007, Knorn_Mason2009, Fornasini,   Blanchini2015, Valcher2016} and  also \cite{Liu_Yu_Wang, Liu_Dang,   YSun2016, Li2018, Liu2018} for  time-delay systems and the comparison method).

We would like to emphasize that the above references are all dedicated to the stability analysis of switched systems under arbitrary switching signals. Although in engineering practice, switching laws must usually satisfy some technical restriction (for instance, the time between switching instances must be long enough) such a strong requirement is important when the switching mechanism is unknown, or too complicated to be useful in the stability analysis. Moreover, as mentioned in \cite{Liberzon}, modern computer-controlled systems are capable of very fast switching rates, which creates
the need to be able to test the stability of the switched system for arbitrarily fast switching signals. On the other side, while it is true that the exponential stability problem under arbitrary switching has received most of the recent attention in the switched systems literature and will also be a subject of this paper, a number of other stability problems stand out as being worthy of attention (see, e.g. \cite{Lin,shorten2007} and \cite{Lib_Mor}). In particular, the problem of stability and stabilization of switched systems, using the so-called average dwell time switchings (or ADT, for short) has attracted a considerable attention, see e.g. \cite{Liberzon,  hespanha, geromel, zhai, Wick2} and \cite{Zhang2018, Wang2019, Yu2021}, for more recent contributions.  Another problem of interest in this context is that of determining stabilizing switching rules for systems with partly or all unstable constituent subsystems \cite{Yang2018, Li18, Mao2019, Lu2020}. However, these problems are not in the scope our consideration in this paper.

Turning back to the problem to be studied in this paper, consider the linear switched system \eqref{SwLS} which is assumed exponentially stable under arbitrary switching.  Then, based on the stability radius approach mentioned above, in order to measure the system's stability robustness, we can put the problem of computing the smallest size $\min_{k\in \underline N}\|\Delta_k\|$ of unknown disturbances $\Delta_k $  such that the perturbed  switched system $\dot x(t)= \widetilde A_{\sigma(t)}x(t)$ (with $\widetilde A_k= A_k+D_k\Delta_kE_k, \ k\in \underline N$ and $ D_k, E_k, k\in \underline N$ being given matrices defining the structure of perturbations) is  not asymptotically stable, under some switching signal, and define this quantity as the system's stability radius. Such a problem was recently considered in \cite{Son_Ngoc_IET}, for the first time in the literature, to the best of our knowledge, although its significance has been mentioned much earlier, e.g. in \cite{shorten2007}. We were able to established some computable lower bounds and upper bounds for the stability radius of the {\it non-delay} switched linear system \eqref{SwLS},  under the assumption that subsystems have a common QLF  or LCLF (if $A_k$ are Metzler matrices). 

In this paper we will extend the approach which was developed in \cite{Son_Ngoc_IET} for non-delay switched systems to study the robustness of exponential stability for  the class  of {\it time-delay} switched systems described by linear functional differential equations (FDE, for short) of the form
\begin{equation}	\label{DSwFDSa}\dot{x}(t)=A^0_{\sigma(t)}x(t)+ L_{\sigma(t)}x_t, \   t\geq 0, \ \sigma  \in \Sigma_+, \end{equation}
under the assumption  that the nominal constituent subsystems 
$	\dot x(t)=A^0_kx(t)+L_k x_t,\  t\geq 0, k\in \underline N$
(or, equivalently, matrices $A_k^0$ and linear operators $L_k, k\in \underline N $)  are subjected to structured affine perturbations. The primary purpose is to establish the computable formula for estimating the stability radius of this class of systems. Although there is a resemblance in the statement of some results in this paper to those of  \cite{Son_Ngoc_IET}, the proofs are basically different, requiring more sophisticated mathematical tools in infinite-dimensional spaces. In particular, the  results from the stability theory of FDEs \cite{Hale, Ngoc_Naito} and those on the robust stability of positive FDEs \cite{Son_Ngoc2001, Ngoc_Son_SIAM} are needed in the proofs. As an advantage of this general consideration, we obtain a unified framework for the studying the robustness of stability of  switched linear time-delay systems, including those with both discrete multiple delays and distributed delays as particular cases.

The remainder of the paper is organized as follows. In Section 2 we present the notation and  mathematical background necessary to state the main results of
the paper. In Section 3 we present some criteria for exponential stability of switched systems described by linear FDEs, including those for positive systems, which will be used in robustness analysis. In Section 4, for analyzing the robustness of stability, a definition of stability radius of  time-delay switched linear systems is given and some formulas for computing its bounds are established. Two examples are  given to illustrate the obtained results. Finally, in Conclusion we summarize the main contribution of the paper and give some remarks on future work.\\

\section{PRELIMINARIES}\label{sec2}
In this section, we introduce the main notation and present a number of preliminary results to be used in what follows. For a positive integer $ r, \underline r$ denotes the set of numbers $ \{1,2,\ldots, r \}$. Throughout, $\R$ denotes the field of real numbers,  $\R^n$ is the
vector space of all $n$-tuples of real numbers and $\R^{n\times m}$ is the space
of $(n\times m)$-matrices $(a_{ij})$ with entries $ a_{ij}\in \R$.  For  matrices   $A=(a_{ij})$ and $B=(b_{ij})$ in $\mathbb{R}^{n\times m}$, we write $A\geq B$ and $A \gg B$ iff $a_{ij} \geq b_{ij}$ and $a_{ij}>b_{ij}$ for $i\in \underline n,j\in \underline m, $ respectively. $|A|$ stands for the matrix $(|a_{ij}|)$ and $A^{\top}$ is the transpose of $A$. Similar notation is applied for vectors $x\in \R^n$. Without lost of generality, we assume that $\R^n$ is equipped with the $\infty$-norm: $\|x\|=\|x\|_{\infty}=\max_{1\leq i\leq n}|x_i|$ and the norm in $\R^{n\times m} $ is the induced operator norm: $ \|A\|=\max_{\|x\|=1}\|Ax\|= \max_{1\leq i\leq n}\sum_{j=1}^m|a_{ij}|.$
 The maximal real part of any eigenvalue of $A\in \R^{n\times n} $ is denoted by $\mu(A)$. If $\mu(A) < 0 $ (i.e. all the eigenvalues of $A$ are in $\C_-$ -the open left-half complex
plane) then $A$ is said to be {\it Hurwitz stable}. 

Further, for the convenience of the readers, let us recall some well-known facts from functional analysis  which are used frequently in the theory of FDEs (see, e.g.\cite{Hale, Bachman} for details). For $h>0,  C([-h, 0], \R^n)$ denotes the Banach space of continuous functions $\varphi: [-h, 0] \rightarrow \R^n $  with the norm $\|\varphi\|= \max_{\theta\in[-h,0]} \|\varphi(\theta)\|$; $NBV([-h,0],\R)$ will stand for the linear space of  all normalized  functions of bounded variation $\psi: [-h,0] \rightarrow \R$, which are left-side continuous on the interval $(-h,0], \psi (-h) = 0$ and have the bounded total variation $ Var([-h,0],\psi) =  \sup_{P[-h,0]}\sum_{k}|\psi(\theta_k)- \psi(\theta_{k-1})| <\infty $, 
the supremum being taken over the set of all finite partitions of the interval $[-h,0]$. Then it is well-known (see, e.g. Chapter 12 of\cite{Bachman}) that $NBV([-h,0],\R)$ provided with the norm $\|\psi\|= Var([-h,0],\psi)$ is a Banach space which is isometrically isomorphic to $C^*([-h,0],\R)$ -the dual space of $C([-h,0],\R)$ (the Riesz Representation Theorem). In other words, for any linear bounded functional $f: C([-h,0],\R)\rightarrow \R$, there exists a unique $\psi\in NBV([-h,0],\R)$ such that
\begin{equation*}\label{Riesz}
	f(\varphi)=\int_{-h}^0d[\psi(\theta)]\varphi(\theta)\ \ \text{and}\ \  \|f\|=\|\psi\|, \ \forall \varphi\in C([-h,0],\R),
	\end{equation*}
where the integral is understood in the sense of Riemann-Stieltjes. It is worthy of mention that  the requirement of $\psi$ being normalized is essential for proving its uniqueness in the integral representation of $f$ as well as the equality of norms in this representation, see e.g. \cite{Bachman}. In this paper we need, actually, the following multi-dimensional version of the  Riesz theorem, see e.g. \cite{Warga} (Theorem I.5.9 ). Denote by $NBV([-h,0],\R^{p\times q})$  the Banach space of all matrix functions $\delta :[-h,0] \rightarrow \R^{p\times q}$ such that  $\delta_{ij}(\cdot) \in NBV([-h,0],\R), \forall i\in \underline p, \forall j \in \underline q,$  equipped with the norm 
\begin{equation}
	\label{normNBV}
	\|\delta\|= \max_{1\leq i\leq p}\sum_{j=1}^q Var([-h,0],\delta_{ij}). 
\end{equation} If  $L: C([-h,0],\R^q) \rightarrow \R^p$ is a linear bounded operator, then by the Riesz Representation Theorem, there is a unique $\delta \in NBV([-h,0],\R^{p\times q})$ such that
\begin{equation*}\label{L}
L(\varphi)= \int_{-h}^0d[\delta(\theta)]\varphi(\theta), \ \forall \varphi \in C([-h,0],\R^q),
\end{equation*} 
where, by definition, 
\begin{equation*}\label{Li}
\big(L(\varphi)\big)_i= \sum_{j=1}^q\int_{-h}^0d[\delta_{ij}(\theta)]\varphi_j(\theta),\ \forall \varphi= \big(\varphi_1,\varphi_2,\ldots,\varphi_q)^{\top}, \ \forall i\in \underline p.	
	\end{equation*} 
Thus, to each $\delta \in NBV([-h,0],\R^{p\times q}) $ we can associate a nonnegative $(p\times q)$-matrix 
\begin{equation}
\label{var}
V(\delta):= \big(Var([-h,0],\delta_{ij})\big) \geq 0.
\end{equation}
It follows from the definition that  if  both $\R^p$ and $\R^q$  are provided with the norm $\|\cdot\|_{\infty}$ then, for any $\Delta\in \R^{p\times q}, \delta \in NBV([-h,0],\R^{p\times q})$  and any constant matrices $D\in \R^{n\times p}, E\in \R^{q\times  n}$, we have
\begin{equation}
\label{ineqvar}
\|\  |\Delta|\ \|= \|\Delta\|,\ \ \|V(\delta)\| = \|\delta\|, 
\end{equation}
\vskip-0.5cm
\begin{equation}
\label{ineqvar1}
D\delta E\in NBV([-h,0],\R^{n\times n})\ \ \ \text{and}\ \ \ \|D\delta E\|\leq \|D\|\|\delta\|\|E\|.
\end{equation}

Finally, recall that $A\in \R^{n\times n}$ is said to be a {\it  Metzler matrix} if all  off-diagonal elements
of $A$ are nonnegative: $a_{ij}\geq 0,$ if $ i\not= j$. For an arbitrary matrix $A\in \R^{n\times n}$ we can associate the  Metzler matrix $\mathcal{M}(A)$, by defining, 
\begin{equation}\label{Metzler}
\mathcal{M}(A) := (\hat a_{ij}), 
\ \hat a_{ii} = a_{ii},\forall i\in \underline n,\ \text{and} \   \ \hat a_{ij} = |a_{ij}|, \forall i\not=j\in \underline n.
\end{equation} 
It can be easily verified that
\begin{equation}\label{M}
	\mathcal{M}(A+B) \leq \mathcal{M} (A)+ |B|, \ \forall A,B\in \R^{n\times n}. 
\end{equation}
Some well-known properties of Metzler matrices are collected in the following lemma (see. e.g. \cite{HornJohnson_91}) which will be used in the sequel.
\begin{lemma}\label{lemma2.1} 
	{\it Let $A\in \mathbb{R}^{n\times n}$ be a Metzler matrix. Then the following statements are equivalent: ${\rm(i)} \; A\ \text{is Hurwitz, i.e.}\\ \mu(A)<0;\ {\rm(ii)}\ Ap\ll 0$ for some $p \in \mathbb{R}_{+}^{n}, p\gg 0$;\  {\rm(iii)}  $A$ is invertible and $- A^{-1}\geq 0$.
	}
\end{lemma}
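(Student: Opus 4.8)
The plan is to exploit the standard correspondence between Metzler matrices and nonnegative matrices via a spectral shift, combined with Perron--Frobenius theory. The central preliminary observation is that if I pick $\alpha>0$ large enough that $M:=A+\alpha I\geq 0$, then $M$ is entrywise nonnegative and its spectrum is exactly $\{\lambda+\alpha:\lambda\in\operatorname{spec}(A)\}$. Since for a nonnegative matrix the spectral radius $\rho(M)$ is itself an eigenvalue and dominates the modulus (hence the real part) of every other eigenvalue, this gives $\mu(A)=\rho(M)-\alpha$; in particular $\mu(A)$ is a real eigenvalue of $A$ carrying a nonnegative eigenvector. I would record at the outset the two facts I intend to reuse: first, $e^{tA}=e^{-\alpha t}e^{tM}\geq 0$ for all $t\geq 0$, because $M\geq 0$ makes every term of the power series for $e^{tM}$ nonnegative; and second, the Collatz--Wielandt-type bound for $M\geq 0$: if $Mp\ll\beta p$ for some $p\gg 0$ then $\rho(M)<\beta$.

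I would then prove the equivalences cyclically as ${\rm(i)}\Rightarrow{\rm(iii)}\Rightarrow{\rm(ii)}\Rightarrow{\rm(i)}$. For ${\rm(i)}\Rightarrow{\rm(iii)}$: if $\mu(A)<0$ then $0$ is not an eigenvalue, so $A$ is invertible, and since $A$ is Hurwitz the integral formula $-A^{-1}=\int_0^\infty e^{tA}\,dt$ holds; together with $e^{tA}\geq 0$ this yields $-A^{-1}\geq 0$. For ${\rm(iii)}\Rightarrow{\rm(ii)}$: set $p:=-A^{-1}\mathbf 1$ with $\mathbf 1$ the all-ones vector, so that $Ap=-\mathbf 1\ll 0$, and $p\geq 0$ because $-A^{-1}\geq 0$. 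For ${\rm(ii)}\Rightarrow{\rm(i)}$: from $Ap\ll 0$ and $p\gg 0$ I get $Mp=Ap+\alpha p\ll\alpha p$, and the Collatz--Wielandt bound gives $\rho(M)<\alpha$, whence $\mu(A)=\rho(M)-\alpha<0$.

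An equally short alternative for ${\rm(ii)}\Rightarrow{\rm(i)}$ is to use the nonnegative left Perron eigenvector $v\geq 0$, $v\neq 0$, of the Metzler matrix $A^{\top}$ satisfying $v^{\top}A=\mu(A)v^{\top}$; pairing it with $Ap\ll 0$, $p\gg 0$ gives $\mu(A)\,(v^{\top}p)=v^{\top}(Ap)<0$ while $v^{\top}p>0$, forcing $\mu(A)<0$. I would keep whichever is cleaner given the exact Perron--Frobenius statement invoked.

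The main obstacle I anticipate is not the algebra but making the positivity claims sharp. Two points need care: pinning down that $\mu(A)$ is genuinely the dominant eigenvalue (real, with an eigenvector) rather than merely an upper bound on real parts, which is what legitimizes writing $\mu(A)=\rho(M)-\alpha$; and upgrading $p\geq 0$ to the strict $p\gg 0$ required in ${\rm(ii)}$, since a vanishing entry of $p=-A^{-1}\mathbf 1$ would force a zero row of $-A^{-1}$, contradicting the invertibility of $A^{-1}=A$. Both points rest only on Perron--Frobenius and on invertibility, so once those inputs are in place the proof closes without lengthy computation.
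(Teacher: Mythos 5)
Your proof is correct. Note first that the paper itself offers no proof of this lemma: it is stated as a collection of standard spectral properties of Metzler matrices (equivalently, of $-A$ being a nonsingular M-matrix), with the reader implicitly referred to the literature (e.g.\ \cite{Son-Hinrichsen96}). So there is no in-paper argument to compare against; judged on its own, your cyclic scheme ${\rm(i)}\Rightarrow{\rm(iii)}\Rightarrow{\rm(ii)}\Rightarrow{\rm(i)}$ is the standard and complete route. The shift $M=A+\alpha I\geq 0$ together with Perron--Frobenius correctly identifies $\mu(A)=\rho(M)-\alpha$ as a real eigenvalue; the integral representation $-A^{-1}=\int_0^\infty e^{tA}\,dt$ combined with $e^{tA}=e^{-\alpha t}e^{tM}\geq 0$ gives ${\rm(iii)}$; the choice $p=-A^{-1}{\bf 1}_n$ with the zero-row argument correctly upgrades $p\geq 0$ to $p\gg 0$; and the Collatz--Wielandt bound (or, equivalently, your left-eigenvector pairing) closes the cycle. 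The only blemish is the phrase ``the invertibility of $A^{-1}=A$,'' which is surely a slip for ``the invertibility of $A^{-1}$'' (a zero row of $A^{-1}$ would make $A^{-1}$ singular); the surrounding reasoning is unaffected.
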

Moreover, for any $A\in \R^{n\times n}$, we have (see, e.g. \cite{Son-Hinrichsen96}),
\begin{equation}\label{muA}
\mu (A) \leq \mu(\mathcal {M}(A)).
\end{equation} Therefore, $A$ is Hurwitz if $\mathcal {M}(A)$ is Hurwitz but not conversely, in general.

\section{CRITERIA FOR EXPONENTIAL STABILITY}\label{sec2}

In this section, we will present some criteria for checking exponential stability of linear switched systems described by linear FDEs that will be used in the next section to estimate the stability robustness of  systems.\\

Consider a  switched linear system whose dynamics are described by the FDE of the form \eqref{delay2}. Then, by Riesz Representation Theorem,  as we recalled in Section 2, this system can be equivalently represented in the integral form
\begin{equation}
\label{DSwFDS}
\dot{x}(t)=A^0_{\sigma(t)}x(t)+\int_{-h}^0d[\eta_{\sigma(t)}(\theta)]x(t+\theta),  t\geq 0, \sigma  \in \Sigma_+,
\end{equation}
where for each $t\geq 0, A^0_{\sigma(t)}\in \mathcal{A} := \{A_k^0, k\in \underline{N}\} \subset \R^{n\times n}, $ -a given family of $N$  real matrices and $ \eta_{\sigma(t)} \in  \Gamma:= \{\eta_k, k\in \underline{N}\} \subset NBV([-h,0],\R^{n\times n})$, -a given family of $N$  matrix functions with normalized bounded variation elements $\eta_{k,ij}(\cdot)$. In what follows $\Sigma_+$ is assume to be the set of all switching signals  $\sigma : [0,+\infty ) \rightarrow \underline N $ which  are piece-wise constant, right-side continuous  functions, with points of discontinuity $\tau_k, k=1,2,\ldots $ (known as the {\it switching
	instances}) having a strictly positive {\it dwell time}:  $ \tau(\sigma) := \inf_{k\in \N} (\tau_{k+1}-\tau_k) >0 $.  Thus, each $\sigma$ performs  switchings  between the following $N$ time-delay linear subsystems $(A^0_k,\eta_k): $ 
\begin{equation}
\label{DSwFDSk}
(A^0_k,\eta_k): \hskip3.5cm \dot{x}(t)=A^0_kx(t)+\int_{-h}^0d[\eta_k(\theta)]x(t+\theta), \  t\geq 0,	\ k\in \underline N, \hskip4.0cm
\end{equation}
where the $i$-th component of the second term in \eqref{DSwFDSk}, for each  
$k\in \underline N$ and $ i=1,\ldots,n, $ is defined as
\begin{equation*}\label{stieljest}
\bigg(\int_{-h}^{0}d[\eta_k(\theta)]x(t+\theta)\bigg)_i=\sum_{j=1}^{n}\int_{-h}^{0}d[\eta_{k,ij}(\theta)]x_j(t+\theta),  
\end{equation*}
the integrals in the right-side above being understood in the Riemann-Stieltjes sense. In what follows we will sometimes reffer to the swiched system \eqref{DSwFDS} with the constituent subsystems \eqref{DSwFDSk} as the system \eqref{DSwFDS}-\eqref{DSwFDSk} when the link between them needs to be indicated. 

For any $\varphi\in C([-h,0],\R^n) $ and any switching signal $\sigma \in \Sigma_+$, the system \eqref{DSwFDS} admits a unique solution  $x(t)=x(t,\varphi, \sigma),$ $ t\geq -h, $ satisfying the initial condition $x(\theta)=\varphi(\theta),\; \theta\in[-h,0].$ Note that the solution $x(t) $ is absolutely continuous function on $[0,+\infty)$ and differentiable everywhere, except for the set of the switching instances $\{\tau_k\}$ of $\sigma $  where $x(t)$ has only Dini right- and left-derivatives $D^+x(\tau_k), D^-x(\tau_k) $  which are generally different.

The system \eqref{DSwFDS}-\eqref{DSwFDSk} is said to be {\it positive} if   $x(t)\geq 0,  \forall t\geq 0 $ whenever  $ \varphi(\theta) \geq 0, \ \forall \theta \in [-h,0]$. It is trivial  to show that the system is positive if and only if all subsystems  \eqref{DSwFDSk} are positive. The latter  is equivalent to the condition that, for each $k\in \underline N, A_k^0$ is a Metzler matrix and $\eta_k$ is increasing on $[-h,0]: 0=\eta_k(-h)\leq \eta_k(\theta_1)\leq \eta_k(\theta_2)$, if $-h\leq \theta_1 \leq\theta_2\leq 0$ (see, e.g. \cite{Son_Ngoc2001, Ngoc_Naito}). 

\begin{definition} 
	\label{defDSwFDS}
	The switched system \eqref{DSwFDS}-\eqref{DSwFDSk} is said to be  exponentially stable under arbitrary switching if there exist real numbers $M >0, \alpha >0 $ such that for any $\varphi\in C([-h,0],\R^n) $ and any switching signal  $\sigma \in \Sigma_+,$ the solutions $ x(t,\varphi, \sigma) $  of \eqref{DSwFDS} satisfies
	\begin{equation}
	\label{conditiondef1}
	\|x(t,\varphi,\sigma)\|\leq M e^{-\alpha t}\|\varphi\|, \quad \forall t\geq 0.
	\end{equation}
\end{definition}
Clearly, for the switched system \eqref{DSwFDS}-\eqref{DSwFDSk} to be exponentially stable it is necessary that all of the individual subsystems \eqref{DSwFDSk} are exponentially stable or, equivalently (see e.g. \cite{Hale}), all zeros of their characteristic quasi-polynomials $\text  {det} \; P_k(s)$, which are defined as 
\begin{equation} 
\label{quasicharact}
P_k(s)=sI-A^0_k-\int_{-h}^0e^{s\theta}d[\eta_k(\theta)], 
k\in \underline N,
\end{equation}
have negative real parts. Similarly to the case of switched systems with no delays (i.e. when $\eta_k \equiv 0, \forall k$), the last condition is, however,  not sufficient for exponential stability of \eqref{DSwFDS}-\eqref{DSwFDSk} under arbitrary switching.

The following theorem gives a verifiable sufficient condition for exponential stability of the class of delay switched linear systems  of the form \eqref{DSwFDS} that will be used in the next section for analyzing stability robustness. The main idea of the proof is essentially based on the  comparison principle of solutions (see and compare with  \cite{Liu_Dang, NgocAMS2013}). We give the detailed proof for the convenience of the readers. 

\begin{theorem}
	\label{main_lemma} 
	Consider the time-delay  switched linear system  \eqref{DSwFDS}-\eqref{DSwFDSk}.
	If there exists strictly positive vector $ \xi \gg0$ such that	
	\begin{equation}
	\label{cond2}
	\big(\mathcal{M}(A^0_k)+ V(\eta_k)\big)\xi \ll 0, \  \forall k \in \underline N,
	\end{equation}
	where the  matrix of variations  $V(\eta_k)$ and the  Metzler matrix  $ \mathcal{M}(A^0_k) $ are defined, respectively, by \eqref{var}  and \eqref{Metzler}, then the  system  is exponentially stable under arbitrary switching. 
\end{theorem}
\begin{proof}
Due to the  linearity of the system, it suffices to prove \eqref{conditiondef1} for any $\varphi\in C([-h,0],\R^n)$ with $\|\varphi\|\leq 1$, that is
\begin{equation}
\label{estim1}
\|x(t)\| \leq Me^{-\alpha t}, \ \forall t \geq 0,
\end{equation} where  $x(t)=x(t,\varphi,\sigma) $ is the solution of \eqref{DSwFDS} satisfying the initial condition $x(\theta)=\varphi(\theta),\; \theta\in[-h,0]$. To this end, let \eqref{cond2} hold for  some $\xi:=(\xi_1,\xi_2,...,\xi_n)^{\top}\in \R^n$ with  $ \xi_i>0, \forall i\in \underline{n}.$ Then, by continuity, one can choose a sufficiently small $\alpha >0$ such that 
\begin{equation}
\label{metzlerbeta}
(\mathcal{M}(A^0_k)+e^{\alpha h} V({\eta}_k))\xi\ll -\alpha \xi, \  \forall k \in \underline N.
\end{equation}
Choosing a real number $M > \dfrac{\|\xi\|}{ \min_{i\in \underline n}\xi_i }= \dfrac{\max_{i\in \underline n}\xi_i}{ \min_{i\in \underline n}\xi_i }\geq 1$  and setting
\begin{equation}
\label{defu}
y(t) = Me^{-\alpha t}\dfrac{\xi}{\|\xi\|},\ t\geq 0,
\end{equation} 
we have, obviously, 
\begin{equation}\label{d}
	 |x(t)|< y(t), \ \forall t\in [-h,0].
\end{equation}
 By the definition of the $\infty$-norm,  in order to prove \eqref{estim1}  it suffices to verify that 
\begin{equation}
\label{def2}
|x_i(t)|\leq y_i(t), \  \forall t\geq 0,\ \forall i\in \underline n. 
\end{equation} 
Assume to the contrary that \eqref{def2} does not hold for some solution  $x(t)=x(t,\varphi_0,\sigma_0)$ with  $\|\varphi_0\|\leq 1$ and $\  \sigma_0\in \Sigma_+$. Then, there exists $i'\in \underline n$ such that the set $ T_{i'}:=\big\{t>0:  |x_{i'}(t)|> y_{i'}(t)\big\} $ is nonempty. Defining $\bar t_0 \geq 0$ and $i_0\in \underline n$ by setting $\bar t_0=  \min_{i'\in \underline n}\inf \big\{ t\in T_{i'} \big\} = \inf \big\{ t\in T_{i_0} \big\}$, we have, by the continuity and \eqref{d} that $\bar t_0> 0 $ and
\begin{equation}\label{conditionproof 2}
	|x_{i}(t)|\leq y_{i}(t),\  \forall i\in \underline n,\ \ \forall t\in [-h,\bar t_0), \ \text{and}\ \ 	|x_{i_0}(\bar t_0)|= y_{i_0}(\bar t_0),\
\end{equation}
and there exist a sequence $t_k\searrow \bar t_0$ such that
\begin{equation}\label{io}
	|x_{i_0}(t_k)| >y_{i_0}(t_k), k=1,2, \ldots 
\end{equation} 
\noindent Assume $\{ \tau_k\}_{k=1}^{+\infty}$ be the sequence of switching instances of $\sigma_0$ and $\bar t_0\in [\tau_{m_0},\; \tau_{m_0+1})$, then, by the assumption on the dwell time of switching signals, one can choose a sufficiently small $\epsilon_1>0, $ such that $  [\bar t_0,\ \bar t_0+\epsilon_1) \subset [\tau_{m_0},\; \tau_{m_0+1})$. Letting $\sigma_0(t)=\sigma_0(\tau_{m_0}):=k_0, $ for $t\in [\tau_{m_0},\; \tau_{m_0+1})$ then, by definition, the solution $x(t)$ satisfies
\begin{equation*}
\dot{x}(t)=A^0_{k_0}x(t) +\int_{-h}^0d[\eta_{k_0}(\theta)]x(t+\theta),  t\in  [\tau_{m_0},\tau_{m_0+1}).
\end{equation*}
Denoting elements of $A^0_{k_0}$ and $\eta_{k_0}(\theta)$, respectively, by $ a_{k_0, ij}$ and $ \eta_{k_0,ij}(\theta), i,j \in \underline n$ then, from the last equation,  we can deduce easily, for any $t\in [\tau_{m_0},\; \tau_{m_0+1})$ and  every $i\in\underline{n}$, that
\begin{equation}\label{D+}
D^+|x_i(t)|  \leq  a_{k_0,ii}|x_i(t)|+\sum_{j=1,j\neq i}^n|a_{k_0,ij}|\;|x_j(t)| + \sum_{j=1}^n(V({\eta}_{k_0}))_{ij}\max_{\theta\in[-h,0]}|x_j(t+\theta)|.
\end{equation}
Therefore, by  \eqref{metzlerbeta}, \eqref{defu} and \eqref{D+} (with  $i=i_0, t=\bar t_0), $ and using the equality in \eqref{conditionproof 2} we get
\begin{equation*}  
D^+|x_{i_0}(\bar t_0)|\leq \dfrac{M}{\|\xi\|}e^{-\alpha \bar t_0}\big(\big(\mathcal{M}(A^0_{k_0})+e^{\alpha h}V({\eta}_{k_0})\big)\xi\big)_{i_0}<\dfrac{M}{\|\xi\|}e^{-\alpha \bar t_0}(-\alpha\xi_{i_0})=\dfrac{d}{dt}y_{i_0}(\bar t_0).
\end{equation*}
On the other hand, by definition of  the Dini right-derivative and \eqref{conditionproof 2},\eqref{io} we have
\begin{equation*}
	D^+|x_{i_0}(\bar t_0)|= \limsup_{\epsilon\searrow0^+}\dfrac{|x_{i_0}(\bar t_0+\epsilon)|-|x_{i_0}(\bar t_0)|}{\epsilon}\geq \limsup_{k\rightarrow +\infty}\dfrac{|x_{i_0}(t_k)|-|x_{i_0}(\bar t_0)|}{t_k-\bar t_0} \geq\lim_{k\rightarrow +\infty}\dfrac{y_{i_0}(t_k)-y_{i_0}(\bar t_0)}{t_k-\bar t_0}=\dfrac{d}{dt}y_{i_0}(\bar t_0), 
\end{equation*}
contradicting the previous strict inequality. This completes the proof. \end{proof}
Assume that the switched system \eqref{DSwFDS}-\eqref{DSwFDSk} is positive or, equivalently,   $\mathcal M(A^0_k)= A^0_k$ and $\eta_k$ are increasing on $[-h,0],$ for each $k\in \underline N$.  Since $\eta_k(-h)=0$ it follows $ V(\eta_k)=\eta_k(0), \ \forall k\in \underline N$. Therefore we have
\begin{corollary}\label{positive}
	Let the time-delay  switched linear system  \eqref{DSwFDS}-\eqref{DSwFDSk} be positive.
	If there exists strictly positive vector $ \xi \gg0$ such that	
	\begin{equation} \label{cond2a}
		\big(A^0_k + \eta_k(0)\big)\xi \ll 0, \  \forall k \in \underline N,
	\end{equation}	
	then the system is exponentially stable under arbitrary switching.  	
\end{corollary}

\begin{remark}
	\label{re1}
	{\rm  The condition \eqref{cond2} means that the dual positive linear subsystems $\dot x(t)=\! \big(A^0_k + \eta_k(0)\big)^{\top}\!x(t), t\geq 0,$ $ k\in \underline N$ has the common LCLF of the form $L(x)=\xi^{\top}x$.  In the case of non-delay systems, i.e. when $\eta_k\equiv 0$, this result is given in \cite{Blanchini2015} (Proposition 3.4). Note additionally that in order to check whether or not Metzler matrices $ \big(\mathcal{M}(A^0_{k})  + V(\eta_k)\big)^{\top}, k\in \underline N$ (in Theorem \ref{main_lemma}) and   $\big(A^0_k + \eta_k(0)\big)^{\top}, \ k\in \underline N $  (in Corollary \ref{positive})  share  a common LCLF, one can use the procedure given by Theorem 4 in \cite{Knorn_Mason2009}.   
	}
\end{remark}
\begin{remark}\label{NAHS}
	{\rm  
It is worth mentioning here some relation between the above stability results and known results in the literature:

(i) Observe first that in the case of non-delay and non-switched systems (i.e. $\eta \equiv 0, N=1 $), Theorem \ref{main_lemma} is straightforward from Lemma \ref{lemma2.1} and \eqref{muA}.

(ii) In the case of delay non-switched systems ($N=1$) Theorem \ref{main_lemma} is an extension of the well-known result  on global asymptotic stability of positive linear systems with discrete delays (see, e.g. \cite{Chelaboina2004}) to linear FDEs which particularly cover both discrete delays and distributed delays and, moreover, are not necessarily positive. Indeed, the delay system
\begin{equation}\label{onedelay}
	\dot x(t) = A_1^0x(t) + A_1^1x(t-h) + \int_{-h}^0 B_1(\theta)x(t+\theta)d\theta, \ t\geq 0
\end{equation}
	 can obviously be represented in the form \eqref{DSwFDSk} (with $k=1$), by choosing 
	 \begin{equation*}
	\eta_1= \eta_1^{disc} + \eta_1^{dist}, \ \ \eta_1^{disc}(\theta)=\begin{cases} 0 \ &\text {if}\ \  \theta=-h,\\ A^1_1\ &\text{if}\ \  \theta \in (-h,0],       \end{cases} \ \ \text{and} \ \  \ \eta_1^{dist}= \int_{-h}^{\theta}B_1(s)\theta(t+s)ds.
	 \end{equation*}
 Therefore, by Theorem \ref{main_lemma}, the delay system \eqref{onedelay} is exponentially stable if \eqref{cond2} (with $k=1$) holds for some vector $\xi \gg 0$. If, moreover, the system \eqref{onedelay} is positive, or equivalently, $A^0_1$ is Metzler  and $A^1_1\geq 0, B_1(\theta)\geq 0, \forall \theta \in [-h,0]$ then the condition  \eqref{cond2} take the form $\big(A^0_1 +V(\eta_1)\big)\xi = \big(A_1^0+ A^1_1+ \int_{-h}^0B_1(s)ds\big)\xi \ll 0 $ which is reduced to the main result of \cite{Chelaboina2004} (Theorem 3.1) if $B_1\equiv 0$, i.e. if the system has no distributed delay. It is important to emphasize that our proof based on the comparison principle while in \cite{Chelaboina2004}  the Lyapunov function method was used.  
  
(iii) If the linear switched system \eqref{DSwFDS}-\eqref{DSwFDSk} is non-delay (i.e. $\eta_k \equiv 0, \ \forall k\in \underline N $) then Theorem \ref{main_lemma} gets back to a result given in \cite{Son_Ngoc_IET} (Lemma 2). 

	(iv) If $N=1$ and the system \eqref{DSwFDS} is positive then the condition \eqref{cond2a} of Corollary \ref{positive} is also necessary for exponential stability. In fact, since $A^0_k+\eta_k(0)$ is a Metzler matrix, \eqref{cond2a}  is equivalent to $\mu\big(A^0_k+\eta_k(0)\big) <0$ (due to Lemma \ref{lemma2.1}) which, in turn, implies the exponential stability of \eqref{DSwFDS}, by Theorem 4.1 in \cite{Ngoc_Naito}. 

(v) In our recent work \cite{SonNgocNAHS}, a more  general result similar to Theorem \ref{main_lemma} has been obtained, also by the comparison principle, for a  class of nonlinear time-varying switched systems under switchings with average dwell time, which covers the above result as a particular case. The proof of Theorem \ref{main_lemma} is much simpler and thus is given here for the sake of completeness of presentation.} 
\end{remark}

As the most important particular case of Theorem  \ref{main_lemma}, let us consider a class of switched linear system with multiple discrete time delays and distributed time delays of the form
\begin{align}
\label{discretedelaySwS}
\dot{x}(t) =  A^0_{\sigma(t)}x(t) + \sum_{i=1}^{m_{\sigma(t)}}A^i_{\sigma(t)}x(t-h^i_{\sigma(t)}) +\int_{-h_{\sigma(t)}}^0 B_{\sigma(t)}(\theta)x(t + \theta)d\theta,
\  t\geq 0,\ \sigma  \in \Sigma_+, 
\end{align}
where, for each $k\in \underline {N},\ 0 = h^0_k < h^1_k <...< h^{m_k}_k$, matrices $ A^{i}_k \in \R^{n\times n}$  and matrix functions $B_k(\cdot) \in C([-h_k,0],\R^{n\times n})$ are given. Define  $h:= \max_{k\in \underline {N}, i\in \underline{m_k}} \{ h^i_{k}, h_k\}, m:=\max\{m_k, k\in \underline N\}$ and set, for each $k\in \underline N, B_k(\theta)\equiv 0, \theta\in [-h,-h_k)$ (if $h_k<h$)  and $A^i_k=0$ for $i=m_{k+1}, \ldots, m $ (if $m_k<m$), then as in Remark \ref{NAHS} (ii),  the system \eqref{discretedelaySwS} can be represented in the form \eqref{DSwFDS}, with 
\begin{equation}
\label{eta_k}
\eta_k(\theta)=\sum\limits_{i=1}^{m} A_k^i \chi_{(-h_k^i, 0]}(\theta)+\int_{-h}^{\theta}B_k(s)ds,  k\in \underline{N},
\end{equation}
where $ \chi_M  $ denotes the characteristic function of a set $M\subset \R: \chi_M(\theta)=1$ if $\theta \in M $ and $\chi_M(\theta)=0$ otherwise. Since, obviously, 
\begin{equation*}
\label{var_eta_k}
V(\eta_k)\leq \sum\limits_{i=1}^{m} |A_k^i|+\int_{-h}^{0} |B_k(s)|ds,
\end{equation*}
we get therefore, by Theorem \ref{main_lemma},
\begin{corollary}\label{main_discretedelay} 
	Consider the time-delay switched linear system \eqref{discretedelaySwS}. If  there exists $\xi\in\R^n, \xi\gg0$ satisfying	
	\begin{equation}
	\label{cond3}
	\big(\mathcal{M}(A^0_k)+ \sum\limits_{i=1}^{m} |A_k^i|+\int_{-h}^0|B_k(s)|ds\big) \xi \ll 0, \  \forall k \in \underline N,
	\end{equation}
	where $h,m$ are defined as above, then this system is exponentially stable under arbitrary switching.
\end{corollary}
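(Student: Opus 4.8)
The plan is to derive this statement directly from Lemma \ref{main_lemma} by treating \eqref{discretedelaySwS} as the special case of \eqref{DSwFDS} already identified through \eqref{eta_k}, and then transferring the hypothesis \eqref{cond3} into the hypothesis \eqref{cond2} of that lemma. First I would fix the common vector $\xi\gg0$ furnished by \eqref{cond3} and recall that, with $\eta_k$ defined by \eqref{eta_k}, the switched system \eqref{discretedelaySwS} coincides with \eqref{DSwFDS}. It therefore suffices to verify that this same $\xi$ satisfies $\big(\mathcal{M}(A^0_k)+V(\eta_k)\big)\xi\ll0$ for every $k\in\underline N$, after which Lemma \ref{main_lemma} yields exponential stability under arbitrary switching at once.

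The substantive step is the variation estimate \eqref{var_eta_k}, which I would justify entrywise. Each scalar function $\eta_{k,ij}$ is, by \eqref{eta_k}, the sum of the step functions $(A_k^\ell)_{ij}\,\chi_{(-h^\ell_k,0]}$ for $\ell=1,\dots,m$, whose jumps occur at the distinct points $-h^\ell_k$ and contribute $|(A_k^\ell)_{ij}|$ apiece to the total variation, together with the absolutely continuous term $\int_{-h}^{\theta}B_{k,ij}(s)\,ds$, whose variation on $[-h,0]$ equals $\int_{-h}^0|B_{k,ij}(s)|\,ds$. Subadditivity of total variation then gives $Var_{[-h,0]}\eta_{k,ij}\le\sum_{\ell=1}^m|(A_k^\ell)_{ij}|+\int_{-h}^0|B_{k,ij}(s)|\,ds$, which is precisely \eqref{var_eta_k} read as an inequality between nonnegative matrices.

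The remaining step, and the only one requiring a little care, is to pass the matrix inequality through the multiplication by $\xi$. Writing $W_k:=\sum_{i=1}^m|A_k^i|+\int_{-h}^0|B_k(s)|\,ds$, both $V(\eta_k)$ and $W_k$ are nonnegative matrices with $0\le V(\eta_k)\le W_k$; since $\xi\gg0$ has nonnegative entries, this order is preserved, i.e. $V(\eta_k)\xi\le W_k\xi$. Adding $\mathcal{M}(A^0_k)\xi$ to both sides yields
\[
\big(\mathcal{M}(A^0_k)+V(\eta_k)\big)\xi\le\big(\mathcal{M}(A^0_k)+W_k\big)\xi\ll0,
\]
the strict inequality on the right being exactly \eqref{cond3}. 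Hence \eqref{cond2} holds with the same $\xi$, and Lemma \ref{main_lemma} applies. I do not expect a genuine obstacle: the entire content lies in recognizing the reduction to \eqref{DSwFDS} and in the elementary monotonicity fact that $V(\eta_k)\le W_k$ together with $\xi\gg0$ forces $V(\eta_k)\xi\le W_k\xi$, which rests on the nonnegativity of the matrices involved.
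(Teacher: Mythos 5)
Your proposal is correct and follows essentially the same route as the paper: the paper identifies \eqref{discretedelaySwS} as the special case of \eqref{DSwFDS} with $\eta_k$ given by \eqref{eta_k}, invokes the variation bound \eqref{var_eta_k}, and applies Lemma \ref{main_lemma}. You merely supply the (correct) entrywise justification of \eqref{var_eta_k} and the monotonicity step $V(\eta_k)\xi\le W_k\xi$ that the paper leaves implicit as ``obvious.''
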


Similarly, from Corollary \ref{positive}, we have  

\begin{corollary}\label{positive_discretedelay} 
	Consider the switched positive linear system  with delay \eqref{discretedelaySwS}. If  there exists $\xi\in\R^n, \xi\gg0$ satisfying	
	\begin{equation}
	\label{cond4}
	\big(A^0_k+ \sum\limits_{i=1}^{m} A_k^i+\int_{-h}^0B_k(s)ds\big) \xi \ll 0, \  \forall k \in \underline N,
	\end{equation}
	where $h,m$ are defined as above, then this system is exponentially stable under arbitrary switching.
\end{corollary}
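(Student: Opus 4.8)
The plan is to deduce this statement directly from Corollary \ref{main_discretedelay} by exploiting the positivity hypotheses to simplify the three matrix operators appearing in condition \eqref{cond3}. First I would invoke the positivity characterization recalled immediately before the corollary: the delay system \eqref{discretedelaySwS} is positive precisely when each $A^0_k$ is a Metzler matrix, each $A^i_k \geq 0$, and each $B_k(\theta)\geq 0$ on $[-h_k,0]$. These are exactly the standing assumptions in the present statement, so I may freely use them.

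Next I would observe how each term in \eqref{cond3} collapses under positivity. Since $A^0_k$ is Metzler, its off-diagonal entries are already nonnegative, so replacing them by their absolute values in the definition \eqref{metzler} changes nothing; hence $\mathcal{M}(A^0_k)=A^0_k$. Likewise, because $A^i_k\geq 0$ entrywise we have $|A^i_k|=A^i_k$, and because $B_k(s)\geq 0$ for every $s$ we have $|B_k(s)|=B_k(s)$, so that $\int_{-h}^0|B_k(s)|\,ds=\int_{-h}^0 B_k(s)\,ds$. Substituting these three identities into \eqref{cond3} shows that, under the positivity assumptions, condition \eqref{cond3} is literally identical to condition \eqref{cond4}.

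Finally, given that \eqref{cond4} holds for some $\xi\gg 0$, the identities above guarantee that the very same $\xi$ satisfies \eqref{cond3}, whence Corollary \ref{main_discretedelay} applies and yields exponential stability of \eqref{discretedelaySwS} under arbitrary switching. There is essentially no obstacle here: the only point requiring care is the entrywise bookkeeping that the Metzler operation $\mathcal{M}(\cdot)$, the absolute value $|\cdot|$, and the integral of the absolute value each act trivially on the respective positive data, which is immediate from the definitions. All the analytic substance of the argument is contained in Corollary \ref{main_discretedelay} (and ultimately in the comparison-principle proof of Lemma \ref{main_lemma}); the present corollary is merely its specialization to the positive case, where the bounding matrices $\mathcal{M}(A^0_k)+\sum_{i=1}^m|A^i_k|+\int_{-h}^0|B_k(s)|\,ds$ coincide with the actual system matrices.
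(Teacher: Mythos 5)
Your proof is correct and follows exactly the paper's route: the paper derives Corollary \ref{positive_discretedelay} from Corollary \ref{main_discretedelay} by noting that positivity forces $\mathcal{M}(A^0_k)=A^0_k$, $|A^i_k|=A^i_k$ and $|B_k(s)|=B_k(s)$, so that \eqref{cond3} reduces to \eqref{cond4}. Nothing is missing.
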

\begin{remark}
	\label{re0}
	{\rm  It is obvious that the above proof works also in the case of continuous time-varying delays $h_k^i(t)$ and $h_k(t)$ satisfying $
		\sup_{t\geq 0, k\in \underline N, i\in \underline m}\{h_k^i(t),h_k(t) \} \leq h, 
		$
		for some finite $h >0$. Therefore,  Corollary \ref{positive_discretedelay} can be considered as a generalization of the main result proved  in \cite{Liu_Yu_Wang, Liu_Dang} (where $B_k(\cdot)\equiv 0, \forall k)$. 
	}
\end{remark}	

The following  consequence of Theorem \ref{main_lemma} gives a sufficient condition for exponential stability of {\it a set of time-delay switched linear systems} of the form \eqref{DSwFDS}. The proof is straightforward, by the Remark \ref{NAHS} (iii).

\begin{corollary}
	\label{Corollary1} 
	Let a Metzler matrix $A_0\in \R^{n\times n}$ and an increasing matrix function  $\eta_0\in NBV([-h,0],\R^{n\times n})$ be given and the positive linear system $(A_0,\eta_0)$ of the form 
	\begin{equation}
	\label{DFDS}
	\dot{x}(t)=A_0x(t)+\int_{-h}^0d[\eta_0(\theta)]x(t+\theta), \  t\geq 0,
	\end{equation}
	be exponentially stable. Then, all time-delay switched linear systems of the form \eqref{DSwFDS} satisfying  
	\begin{equation}
	\label{condcoro1}
	\mathcal{M}(A^0_k)\leq A_0, \ V(\eta_k)\leq V(\eta_0),\  \forall k\in \underline{N},
	\end{equation}
	are exponentially stable under arbitrary switching.
\end{corollary}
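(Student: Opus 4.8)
The plan is to exhibit a single strictly positive vector $\xi \gg 0$ that verifies the hypothesis \eqref{cond2} of Lemma \ref{main_lemma} simultaneously for every subsystem $(A^0_k,\eta_k)$, and then to invoke that lemma directly. The vector $\xi$ will be produced from the dominating positive system \eqref{DFDS}, while the order relations \eqref{condcoro1} will transfer the strict inequality from the dominating system down to each subsystem. In this way the corollary becomes essentially a monotonicity statement wrapped around Lemma \ref{main_lemma}.

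First I would extract $\xi$ from the exponential stability of $(A_0,\eta_0)$. Since $A_0$ is Metzler and $\eta_0$ is increasing with $\eta_0(-h)=0$, one has $\mathcal{M}(A_0)=A_0$ and, by \eqref{var}, $V(\eta_0)=\eta_0(0)$; in particular the aggregate matrix $A_0 + V(\eta_0) = A_0 + \eta_0(0)$ is again Metzler. For a \emph{positive} functional differential equation of the form \eqref{DFDS} it is known (the distributed-delay analogue of the characterization recalled in Remark \ref{re0}, cf. \cite{Chelaboina2004}) that exponential stability is equivalent to the aggregate matrix $A_0 + \eta_0(0)$ being Hurwitz. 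Applying Lemma \ref{lemma2.1} to this Metzler matrix then yields a vector $\xi\in\R^n$, $\xi \gg 0$, with
\[
\big(A_0 + V(\eta_0)\big)\xi \ll 0 .
\]

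Second, I would propagate this strict inequality to the subsystems. For each $k\in\underline N$ the hypotheses \eqref{condcoro1} give $\mathcal{M}(A^0_k)\leq A_0$ and $V(\eta_k)\leq V(\eta_0)$ componentwise, hence
\[
\mathcal{M}(A^0_k) + V(\eta_k) \leq A_0 + V(\eta_0).
\]
Since $\xi\geq 0$, multiplying a componentwise matrix inequality by the nonnegative column $\xi$ preserves the order, so
\[
\big(\mathcal{M}(A^0_k) + V(\eta_k)\big)\xi \leq \big(A_0 + V(\eta_0)\big)\xi \ll 0, \qquad \forall k\in\underline N .
\]
Thus the same $\xi$ satisfies \eqref{cond2} for all $k$, and Lemma \ref{main_lemma} yields exponential stability of \eqref{DSwFDS} under arbitrary switching.

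The only genuinely non-routine step is the first one: producing the single vector $\xi$. It rests on the positive-systems fact that exponential stability of \eqref{DFDS} forces the Metzler aggregate $A_0+\eta_0(0)$ to be Hurwitz, equivalently (via Lemma \ref{lemma2.1}) that $(A_0+V(\eta_0))\xi\ll 0$ admits a strictly positive solution. Once $\xi$ is in hand, the remainder is pure monotonicity of the order $\leq$ under multiplication by $\xi\geq 0$, together with a direct appeal to Lemma \ref{main_lemma}.
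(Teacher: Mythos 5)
Your proposal is correct and follows essentially the same route as the paper's own proof: both extract a single $\xi\gg 0$ from the Hurwitz property of the Metzler matrix $A_0+\eta_0(0)$ (equivalent to exponential stability of \eqref{DFDS}) via Lemma \ref{lemma2.1}(ii), then transfer the strict inequality to each subsystem by monotonicity under \eqref{condcoro1} and conclude with Lemma \ref{main_lemma}. No gaps.
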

We conclude this section by the following remark.
\begin{remark}\label{conservatism}{\rm 
Theorem \ref{main_lemma} and its corollaries give rather restrictive  sufficient conditions for exponential stability.  Basically, it is asserted that a linear switched delay system  is exponentially stable under arbitrary switching $\sigma \in \Sigma_+$ if its associate upper bounding positive subsystems possess a common linear copositive Lyapunov function (LCLF ). The  conservatism of these results, therefore, come from two sources: the use of upper bounding positive systems and the assumption on the existence of a common LCLF (which is given by the condition \eqref{cond2} or \eqref{cond2a} or similars). For instance, it is easy to give an example of two Metzler  matrices $A_1, A_2$ which are  Hurwitz stable but have no common vector $\xi\gg 0$ satisfying $A_k\xi\ll 0, k=1,2$ (see, e.g. \cite{Blanchini2015}, p.153). While the first source of conservatism is automatically removed when the original system is itself positive, one way to reduce the conservatism of the obtained results is to assume that vector $\xi$ in these conditions maybe different, depending on $k$. For instance, one can replace \eqref{cond2} by: there exist positive vectors $\xi_k\gg 0$ such that
\begin{equation}\label{cond2k}
	\big(\mathcal{M}(A^0_k)+ V(\eta_k)\big)\xi_k \ll 0, \  \forall k \in \underline N.
	\end{equation}
It is remarkable that if the system \eqref{DSwFDS}-\eqref{DSwFDSk} is positive then the relaxed condition of \eqref{cond2a}: $ \exists \xi_k\gg 0\   \text {s.t.} \  \big(A^0_k + \eta_k(0)\big)\xi \ll 0, \  \forall k \in \underline N, $ is also necessary for exponential stability under arbitrary switching, as mentioned in Remark \ref{NAHS} (iv). Therefore the aforementioned assumption is also quite natural. The cost to pay, however, is that the arbitrariness of switching then becomes smaller. Actually, in our recent work  \cite{SonNgocNAHS} it has been shown that  under such  less restrictive condition \eqref{cond2k},  the switched system \eqref{DSwFDS}-\eqref{DSwFDSk} is exponentially stable under arbitrary switchings {\it with average dwell time} (ADT) $ \sigma \in \Sigma_{\tau} \subset \Sigma_+$, for some $\tau >0 $. Similar results, in particular cases, can be found also in  \cite{Qi, Dong2015, Li2018}. In this paper we consider stability problem under arbitrary switching $\sigma\in \Sigma_+$ so that the sufficient conditions of the form \eqref{cond2} are proved that will be used in the next section for robustness analysis of stability.  Of course, similar problems can be considered for exponential stability under arbitrary switching with ADT, but we leave this for a future study.   		
	}
\end{remark}

\section{Bounds for the structured stability  radius }
Assume that the time-delay switched linear system \eqref{DSwFDS}-\eqref{DSwFDSk} is  exponentially  stable  under arbitrary switching $\sigma\in \Sigma_+$  and  the matrices $A^0_k,\eta_k(\cdot),\; k\in \underline{N}$ of the constituent subsystems  are subject to structured  affine perturbations of the form 
\begin{equation} \label{perturb-k}
A^0_k \rightarrow \widetilde A^0_k := A^0_k + D^0_k\Delta_kE^0_k,\  k\in \underline N\ \ \ \text{and}\ \ \ \eta_k(\cdot) \rightarrow \widetilde \eta_k(\cdot):= \eta_k(\cdot)+D^1_k\delta_k(\cdot)E^1_k, k\in \underline N. 
\end{equation}
Here, for each $k\in \underline N,\ D^0_k\in \R^{n\times r_k}, \ E^0_k\in \R^{q_k\times n},$ $ D^1_k\in \R^{n\times s_k},\ E^1_k\in \R^{p_k\times n}$ are  given matrices defining the structure of the perturbations,  $\Delta_k\in \R^{r_k\times q_k},\delta_k\in NBV([-h,0],\R^{s_k\times p_k}), $ $k \in \underline{N}$ are  unknown disturbances. For the sake of brevity, let us denote 
\begin{equation*}
\mathcal{D} :=\{(D^0_k, D^1_k),  k \in \underline{N}\},\ \mathcal{E}:=\{(E^0_k, E^1_k),  k\in \underline{N}\}.
\end{equation*} 
Then the perturbed switched system is described by 
\begin{equation}
\label{pertDSwFDS}
\dot x(t)=\widetilde A^0_{\sigma(t)}x(t)+\int_{-h}^0d[\widetilde \eta_{\sigma(t)}(\theta)]x(t+\theta), t\geq 0,\sigma \in\Sigma_+.
\end{equation}
As is well-known (see e.g. \cite{H-P2, H-P}  for the case of linear systems with no delays), by choosing appropriate structuring matrices $(D^0_k, D^1_k), (E^0_k, E^1_k) $  one can represent the perturbations which affect independently all elements of the $k$-th
subsystem matrices $A^0_k, \eta_k(\cdot)$  or their individual rows, columns or elements. The question we are interested in is, given the perturbation's structures $\mathcal D, \mathcal E$, how  large  disturbances  $\Delta_k,\ \delta_k(\cdot),\; k\in \underline N$ are allowable  without  destroying  the  exponential stability  of  the  perturbed system \eqref{pertDSwFDS}, subject to arbitrary switching signals $\sigma \in \Sigma_+$. To this end, let us measure the size of disturbances 
${\bf\Delta} :=\{[\Delta_k,\ \delta_k(\cdot)], \ k\in \underline N\}$ by the quantity
\begin{equation}
\label{normdelta}
\|{\bf \Delta}\|_{max}:=\max_{k\in \underline{N}}(\|\Delta_k\|+\|\delta_k\|).
\end{equation}
Then the robustness of exponential stability of the system \eqref{DSwFDS}-\eqref{DSwFDSk} can be quantified by the following  definition.
\begin{definition}
	\label{stabrad}
	Assume that the time-delay switched linear system \eqref{DSwFDS}-\eqref{DSwFDSk} is exponentially stable under arbitrary switching. Then its structured stability radius w.r.t. parameters affine perturbations of the form \eqref{perturb-k} is defined as \begin{align}
	\label{stabraddefinit}
	r_{\R}(\mathcal{A},\Gamma,\mathcal{D},\mathcal{E}) :=\inf\big\{\|{\bf\Delta}\|_{max}: \exists\sigma \in \Sigma_+ \text{ s. t. the perturbed system}\  \eqref{pertDSwFDS}\ \text{is not exponentially stable} \big\}.
	\end{align}
	If $D_k^i=E^i_k= I_n, \forall k\in \underline N,  i=0,1$  then we put $ r_{\R}(\mathcal{A},\Gamma)= r_{\R}(\mathcal{A},\Gamma,\mathcal{D},\mathcal{E})$ and call this quantity  unstructured stability radius of the system \eqref{DSwFDS}- \eqref{DSwFDSk}.  	
\end{definition}
It follows from Definition \eqref{stabraddefinit} that the perturbed switched system \eqref{pertDSwFDS} is  exponentially stable under arbitrary switching $\sigma \in \Sigma_+$ for any disturbance ${\bf\Delta} :=\{[\Delta_k,\ \delta_k(\cdot)], \ k\in \underline N\}$ satisfying $\|{\bf \Delta}\|_{max}=\max_{k\in \underline{N}}(\|\Delta_k\|+\|\delta_k\|) < 	r_{\R}(\mathcal{A},\Gamma,\mathcal{D},\mathcal{E}).$

\begin{remark}
	\label{remark3}
	{\rm  It follows from Definition \eqref{stabraddefinit} that the perturbed switched system \eqref{pertDSwFDS} is  exponentially stable under arbitrary switching $\sigma \in \Sigma_+$, for any disturbance ${\bf\Delta} :=\{[\Delta_k,\ \delta_k(\cdot)], \ k\in \underline N\}$ satisfying $\|{\bf \Delta}\|_{max}
			 < 	r_{\R}(\mathcal{A},\Gamma,\mathcal{D},\mathcal{E})$}.
	
\end{remark}
		
\begin{remark}\label{stabrad-k}{\rm
Obviously, if the switched system \eqref{DSwFDS} has only one subsystem $(A^0_k,\eta_k)$, for a fixed $k\in \underline N$,  then the above definition is reduced to the well-known notion of the real structured stability radius $r_{\R}(A^0_k, \eta_k, \mathcal{D}_k, \mathcal{E}_k)$ of the  time-delay subsystem  $(A^0_k,\eta_k)$ subject to structured affine perturbations \eqref{perturb-k}, that was studied in \cite{Son_Ngoc2001, Ngoc_Son_SIAM}, namely,
	\begin{equation}\label{stabradk}
	r_{\R}(A^0_k, \eta_k, \mathcal{D}_k, \mathcal{E}_k)= \inf\big\{\|\Delta_k\|+ \|\delta_k\|: (\widetilde A^0_{k}, \widetilde \eta_{k})\ \ \text {is not exponentially stable} \big\}.	
		\end{equation}
In particular, we can formulate the following result in \cite{Son_Ngoc2001} for later use: If the time-delay system $(A,\eta)$ is positive (i.e. $A$ is Metzler and $\eta$ is increasing on $[-h,0]$) and subjected to affine perturbations $A\rightarrow \widetilde A:= A+D^0\Delta E^0, \eta \rightarrow \widetilde \eta := \eta+D^1\delta E^1$ with {\it nonnegative} structuring matrices $D^i,E^i, i=0,1$ then its structured stability radius $r_{\R}:= r_{\R}(A,\eta, \{D^0,D^1\},\{E^0,E^1\})$ satisfies the following estimates
		$$
		\frac{1}{\max_{i,j \in \{0,1\}}\|G_{i,j}(0)\|}\leq r_{\R} \leq 	\frac{1}{\max_{i \in \{0,1\}}\|G_{i,i}(0)\|}.
		$$ 
		where $G_{i,j}(s)= E^i\big(P(s)\big)^{-1}D^j $ are the transfer matrix functions and $ P(s)= sI-A-\int_{-h}^0e^{s\theta}d[\eta(\theta) ] $ is the characteristic quasi-polynomial matrix of the system. 
		In particular, if $D^0=D^1$ or $E^0=E^1$, then we get
		\begin{align}
		\label{realstabrad}
		r_{\R}(A,\eta, \{D^0,D^1\},\{E^0,E^1\}) =\frac{1}{\max_{i\in\{0,1\}}\|G_{i,i}(0)\|}
		= \frac{1}{\max_{i\in \{0,1\}}\|E^i(P(0))^{-1}D^i\|}.
		\end{align}	
		It follows, in particular,  that the {\it unstructured} stability radius (i.e. when $D^i=E^i=I_n, i=0,1$ ) of a positive time-delay system $(A,\eta)$ can be calculated  by the simple formula
		\begin{equation}\label{unststabrad}
		r_{\R}(A,\eta) = \|(P(0))^{-1}\|^{-1}=\|(A+\eta(0))^{-1}\|^{-1} .
		\end{equation}	
}
\end{remark}
Turning back to the time-delay switched linear system \eqref{DSwFDS}-\eqref{DSwFDSk}, assume  that the condition \eqref{cond2} holds or, equivalently, the open convex cone
\begin{equation*}
\mathcal{G}_{\mathcal{A},\Gamma}=\big\{ \xi \in \R^n_+: \xi\gg 0, \big(\mathcal{M}(A^0_k)+V(\eta_k)\big)\xi\ll 0,k\in \underline N \big\} 
\end{equation*}
is non-empty. Then, by Theorem \ref{main_lemma}, the switched linear system \eqref{DSwFDS}-\eqref{DSwFDSk} is exponentially  stable. Denote, for each  $ \xi \in \mathcal{G}_{\mathcal{A}, \Gamma}$ and each $k\in \underline N, $ 
\begin{equation}\label{beta0}
\big(\mathcal{M}(A^0_k)+V(\eta_k)\big)\xi = - (\beta^k_1(\xi), \beta^k_2(\xi), \ldots, \beta^k_n(\xi))^{\top},  
\end{equation} 
and define
\begin{equation}
\label{beta}
\beta(\xi) := \min_{k\in \underline{N}, i\in \underline n} \beta^k_i(\xi),
\end{equation}
then, clearly, $\beta(\xi) >0$. The following theorem is the main result of the paper.

\begin{theorem}
	\label{theorem4.1} 
Let the switched linear system \eqref{DSwFDS}-\eqref{DSwFDSk} be exponentially stable under arbitrary switching $\sigma\in \Sigma_+$. Assume, moreover, that the condition \eqref{cond2} holds or, equivalently, $\mathcal{G}_{\mathcal{A}, \Gamma} \not=\emptyset $. Then the stability radius of the switched linear system \eqref{DSwFDS}-\eqref{DSwFDSk} subjected to structured affine perturbations \eqref{perturb-k} satisfies the inequality
	\begin{equation}
	\label{estim_1norm}
	\frac{1}{M_0} \sup\limits_{\xi\in \;\mathcal{G}_{\mathcal{A}, \Gamma}}\dfrac{\beta(\xi) }{ \|\xi\|} \leq  	r_{\R}(\mathcal{A},\Gamma,\mathcal{D},\mathcal{E})\leq  \min_{k\in \underline N} r_{\R}(A^0_k, \eta_k, \mathcal{D}_k, \mathcal{E}_k), 
	\end{equation}	
	where	
	\begin{equation*}	\label{M}
	M_0 :=\max\limits_{k\in \underline{N}}\{\|D^0_k\| \|E^0_k\|; \|D^1_k\| \|E^1_k\| \} .
	\end{equation*}
\end{theorem}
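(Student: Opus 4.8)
The plan is to establish the two inequalities in \eqref{estim_1norm} independently. For the lower bound I will verify that any sufficiently small perturbation leaves the defining inequality \eqref{cond2} of Lemma \ref{main_lemma} intact, with an \emph{unchanged} witness vector $\xi$, so that the perturbed system \eqref{pertDSwFDS} stays exponentially stable under arbitrary switching; for the upper bound I will construct a destabilizing perturbation by acting on a single worst subsystem and driving the switched system unstable along the constant switching signal that selects it.

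For the lower bound, fix any $\xi \in \mathcal{G}_{\mathcal{A},\Gamma}$ and any admissible perturbation $\mathbf{\Delta} = \{[\Delta_k,\delta_k]\}$. First I would record the two entrywise monotonicity estimates
\[\mathcal{M}(\widetilde A^0_k) \leq \mathcal{M}(A^0_k) + |D^0_k|\,|\Delta_k|\,|E^0_k|, \qquad V(\widetilde\eta_k) \leq V(\eta_k) + |D^1_k|\,V(\delta_k)\,|E^1_k|,\]
which follow from $\mathcal{M}(A+P)\leq\mathcal{M}(A)+|P|$, the subadditivity of the variation, the entrywise bound $|D\Delta E|\leq|D|\,|\Delta|\,|E|$, and $V(D\delta E)\leq|D|\,V(\delta)\,|E|$. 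Multiplying by $\xi\gg 0$ and using \eqref{beta0} gives, componentwise for each $k$,
\[\big((\mathcal{M}(\widetilde A^0_k)+V(\widetilde\eta_k))\xi\big)_i \leq -\beta^k_i(\xi) + \big(|D^0_k|\,|\Delta_k|\,|E^0_k|\,\xi\big)_i + \big(|D^1_k|\,V(\delta_k)\,|E^1_k|\,\xi\big)_i.\]
I then bound the two perturbation terms in the $\infty$-norm via \eqref{ineqvar}--\eqref{ineqvar1} (in particular $\|\,|A|\,\|=\|A\|$ and $\|V(\delta)\|=\|\delta\|$), obtaining the majorant $\|D^0_k\|\|\Delta_k\|\|E^0_k\|\|\xi\| + \|D^1_k\|\|\delta_k\|\|E^1_k\|\|\xi\| \leq M_0\|\xi\|\,\|\mathbf{\Delta}\|_{max}$ by \eqref{M} and \eqref{normdelta}. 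Since $\beta^k_i(\xi)\geq\beta(\xi)$, whenever $\|\mathbf{\Delta}\|_{max} < \beta(\xi)/(M_0\|\xi\|)$ each such component is strictly negative, so $\xi$ still satisfies \eqref{cond2} for the perturbed data and Lemma \ref{main_lemma} keeps \eqref{pertDSwFDS} exponentially stable. Hence no destabilizing perturbation can have norm below $\beta(\xi)/(M_0\|\xi\|)$, giving $r_{\R}\geq \beta(\xi)/(M_0\|\xi\|)$; taking the supremum over $\xi\in\mathcal{G}_{\mathcal{A},\Gamma}$ and pulling out the constant $1/M_0$ yields the left inequality.

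For the upper bound, pick $k^\ast$ attaining $\min_k r_{\R}(A^0_k,\eta_k,\mathcal{D}_k,\mathcal{E}_k)$. By the definition of the single-subsystem radius recalled in Remark \ref{remark3}, for every $\varepsilon>0$ there is a perturbation $[\Delta_{k^\ast},\delta_{k^\ast}]$ with $\|\Delta_{k^\ast}\|+\|\delta_{k^\ast}\| < r_{\R}(A^0_{k^\ast},\eta_{k^\ast},\mathcal{D}_{k^\ast},\mathcal{E}_{k^\ast})+\varepsilon$ that renders the isolated subsystem $(\widetilde A^0_{k^\ast},\widetilde\eta_{k^\ast})$ not exponentially stable. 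Setting $\Delta_k=0,\delta_k=0$ for $k\neq k^\ast$, the global size is $\|\mathbf{\Delta}\|_{max}=\|\Delta_{k^\ast}\|+\|\delta_{k^\ast}\|$, and the constant switching signal $\sigma\equiv k^\ast$ (which lies in $\Sigma$, its dwell time being infinite since there are no switches) reduces \eqref{pertDSwFDS} to exactly the destabilized subsystem $(\widetilde A^0_{k^\ast},\widetilde\eta_{k^\ast})$. Thus \eqref{pertDSwFDS} fails to be exponentially stable for this $\sigma$, so $\mathbf{\Delta}$ is admissible in \eqref{stabraddefinit} and $r_{\R}(\mathcal{A},\Gamma,\mathcal{D},\mathcal{E}) \leq \min_k r_{\R}(A^0_k,\eta_k,\mathcal{D}_k,\mathcal{E}_k)+\varepsilon$; letting $\varepsilon\downarrow 0$ closes the argument.

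I expect the main obstacle to be the bookkeeping in the lower bound: one must confirm that the perturbed Metzler matrix and variation are dominated entrywise by the original plus a nonnegative correction, and that the operator-norm estimates collapse cleanly to the single product $M_0\|\xi\|\,\|\mathbf{\Delta}\|_{max}$ --- this is where the choice of the $\infty$-norm, the identities \eqref{ineqvar}, and the monotonicity of $x\mapsto Px$ for $\xi\gg 0$ all have to fit together. The upper bound is comparatively routine once one observes that stability under arbitrary switching must in particular survive every constant switching signal, so that each isolated subsystem's stability radius is an admissible destabilizing budget.
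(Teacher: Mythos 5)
Your proposal is correct and follows essentially the same route as the paper: the lower bound is obtained exactly as in the paper's proof, by showing that the same witness vector $\xi$ still satisfies \eqref{cond2} for the perturbed data whenever $\|{\bf\Delta}\|_{max}<\beta(\xi)/(M_0\|\xi\|)$ and then invoking Lemma \ref{main_lemma}, while your upper bound is just a direct $\varepsilon$-version of the paper's proof by contradiction, both resting on the observation that a constant switching signal reduces \eqref{pertDSwFDS} to a single destabilized subsystem.
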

\begin{proof}
To prove the upper bound, assume to the contrary that $r_{\R}(\mathcal{A},\Gamma,\mathcal{D},\mathcal{E}) > \min\limits_{k\in \underline N} r_{\R}(A^0_k,\eta_k, \mathcal{D}_k,\mathcal{E}_k) =  r_{\R}(A^0_{k_0},\eta_{k_0}, \mathcal{D}_{k_0},\mathcal{E}_{k_0}),$ for some $k_0\in \underline N$.
By the definition \eqref{stabradk} of the  structured stability radius  of the subsystem $k_0$, there exists a perturbation $[\Delta_{k_0}, \delta_{k_0}]$ such that  
$$ r_{\R}(\mathcal{A},\Gamma,\mathcal{D},\mathcal{E}) > \|\Delta_{k_0}\|+\|\delta_{k_0}\|> r_{\R}(A^0_{k_0},\eta_{k_0}, \mathcal{D}_{k_0},\mathcal{E}_{k_0}),  
$$
and the time-delay perturbed  system $(\widetilde{A}^0_{k_0}, \widetilde{\eta}_{k_0})$
(with $\widetilde{A}^0_{k_0}:=A^0_{k_0}+D^0_{k_0}\Delta_{k_0}E^0_{k_0},\  \  \widetilde{\eta}_{k_0}(\cdot) := \eta_{k_0}(\cdot) + D^1_{k_0}\delta_{k_0}(\cdot)E^1_{k_0}$) is not exponentially  stable. This implies, however, that the perturbed switched linear system \eqref{pertDSwFDS} is not exponentially stable under the switching signal $\sigma (t)\equiv k_0,\ t\geq 0$, contradicting the definition of stability radius \eqref{stabraddefinit}, in view of Remark \ref{remark3}.

We can deduce, for each $k\in \underline N, \xi \in \mathcal{G}_{\mathcal{A}, \Gamma} $ and arbitrary disturbances $ \Delta_k\in \R^{r_k\times q_k},\ \delta_k(\cdot)\in NBV([-h,0],\R^{s_k\times p_k}),$ 
\begin{align}
\big (\mathcal{M}(\widetilde{A}^0_k)+V(\widetilde{\eta}_k)\big )\xi 
&\leq \big (\mathcal{M}(A^0_k)\xi+V(\eta_k)\xi+ \big (|D^0_k\Delta_kE^0_k|+V(D^1_k\delta_kE^1_k)\big)\xi\notag  \\   
&\leq -\beta(\xi){\bf 1}_n + \big(|D^0_k\Delta_kE^0_k| + V(D^1_k\delta_kE^1_k)\big )\xi, \label{estim5}
\end{align}
\noindent where ${\bf 1}_n := (1,1,\ldots,1)^\top$ and $\beta(\xi)$ is defined by \eqref{beta}. Using \eqref{ineqvar}, \eqref{ineqvar1} and the definition of $M_0$  we get easily, for any $k\in \underline N$, 
\begin{equation}
\label{estim10}\||D^0_k\Delta_kE^0_k| + V(D^1_k\delta_kE^1_k) )\xi \| \leq M_0( \|\Delta_k\|+\|\delta_k\| )\|\xi\|.  
\end{equation}
It follows that, for any disturbance $ {\bf\Delta } :=\{[\Delta_k, \delta_k(\cdot)],k\in \underline N\}$ satisfying
\begin{equation}
\label{pertubestim}
\|{\bf\Delta}\|_{max}:=\max_{k\in \underline{N}}(\|\Delta_k\|+\|\delta_k\|)< \dfrac{1}{M_0}\dfrac{\beta(\xi) }{ \|\xi\|}, 
\end{equation}
we have  $  \|(|D^0_k\Delta_kE^0_k| + V(D^1_k\delta_kE^1_k))\xi \|< \beta(\xi). $  Therefore
every component of the vector $\big(|D^0_k\Delta_kE^0_k| + V(D^1_k\delta_kE^1_k)\big )\xi   $ 
is strictly smaller than $\beta(\xi)$.  It follows,  by \eqref{estim5},  that 
$$
\big (\mathcal{M}(\widetilde{A}^0_k)+ V(\widetilde{\eta}_k)\big )\xi \ll 0, \ \forall k \in \underline N. 
$$
By Theorem \ref{main_lemma}, the perturbed system \eqref{pertDSwFDS} is exponentially stable under arbitrary switching $\sigma\in \Sigma_+$. Since this is proved for any disturbance ${\bf\Delta}$ satisfying \eqref{pertubestim}, we have,  by definition,  $r_{\R}(\mathcal{A},\Gamma,\mathcal{D},\mathcal{E})\geq \dfrac{1}{M_0} \dfrac{\beta(\xi) }{ \|\xi\|}$, for any $\xi \in \mathcal{G}_{\mathcal{A},\Gamma}, $   yielding the lower bound in \eqref{estim_1norm} and completing the proof.
\end{proof}

\begin{remark}
	{\rm 
We note that the calculation of the lower bound of stability radius in \eqref{estim_1norm} requires solving a system on $N$ linear inequalities to form the open convex cone $\mathcal{G}_{\eta,\Gamma}$  of all strictly positive solutions and then solving the optimization problem $ \sup\big\{\dfrac{\beta(\xi)}{\|\xi\|}:\   \xi \in \mathcal{G}_{\eta,\Gamma} \big\}.$ Moreover, since  $\beta(\xi)/\|\xi\| = \beta(\xi/\|\xi\|)$ the last problem is obviously reduced to finding the maximum of the function $\beta (\cdot)$ over the compact set $\text{cl}\mathcal{G}_{\eta,\Gamma}\cap S_1$ where $S_1=\{\xi \in \R^n: \|\xi\|=1\}\ $-the unit ball of $\R^n$. Further, the lower bound in \eqref{estim_1norm} is calculated under a rather restrictive assumption \eqref{cond2} which is equivalent to the existence of common LCLF $L(x)= \xi^{\top}x$ for all linear positive subsystems $\dot x= \big(\mathcal{M}(A^0_k)+ V(\eta_k)\big)^{\top}x, k\in \underline N $, resulting  inevitably in conservatism of this estimating bound. Some possibility to improve this result is mentioned before, in Remark 4. In this regard, it is an interesting question to find out a particular class of time-delay switched linear systems for which the estimates \eqref{estim_1norm} yields actually a formula for calculation the stability radius (see \cite{Son_Ngoc_IET} Corollary 3, for the case of non-delay switched systems). 
	
}
	
\end{remark}

In the following example a numerical simulation in $\R^2$ is given to illustrate Theorem \ref{theorem4.1}.\\
\vskip 0.1cm
{\bf Example 1.}	
	{\rm Consider the time-delay  switched positive linear system \eqref{DSwFDS}-\eqref{DSwFDSk} in $\R^2 $ with $h=1, N=2, $ 
	\begin{equation*}
	A^0_1=
	\begin{bmatrix}
	-5.0221&0.2531\\
	1.0103&-3.0105
	\end{bmatrix},\ 	
	A^0_2 = 
	\begin{bmatrix}
	-4.1023& 0.2517\\
	0.5314&-2.4531
	\end{bmatrix},\
	\end{equation*}
	and, for $k=1,2,$ 
	\begin{equation*}
	\eta_k(\theta)=
	\begin{cases}
	0 \; \; \;  \; \; \; \;\; \; \; \; \quad \quad \text{if} \; \; \; \theta =-1;\\
	B_k\in \R^{2\times 2}\;\;\; \;  \text{if} \; \theta\in (-1,\;0] ,\; \; 
	\end{cases}
	\end{equation*}	
	\begin{align*}
	B_1=
	\begin{bmatrix}
	0.6321&0.3507\\
	1.0315&0.2403
	\end{bmatrix},\; B_2=
	\begin{bmatrix}
	1.103&0.5041\\
	0.7013&0.1102
	\end{bmatrix}.
	\end{align*}	
	Choosing $\xi_0=\begin{bmatrix}
	2 & 5\end{bmatrix}^{\top}\gg 0,$ we verify readily that $\xi_0$ satisfies \eqref{cond2}. Therefore, this time-delay switched linear system is exponentially stable. Assume that system's matrices  are subjected to structured perturbations so that the perturbed subsystems take the form 
	\begin{equation}
	\label{exampertDSw}
	\dot x(t)=\widetilde{A}^0_k(t)+\widetilde{B}_kx(t-1), t\geq 0, k=1,2,
	\end{equation}
	where
	\begin{align*}
	\widetilde{A}^0_1=
\begin{bmatrix}
		-5.0221& 0.2531\\ 
		1.0103+\delta_1&-3.0105+\delta_2
		\end{bmatrix},
	\widetilde{B}_1 = 
\begin{bmatrix}
		0.6321+\gamma_1&0.3507+\gamma_2\\
		1.0315&0.2403
		\end{bmatrix},
	\end{align*}
	\begin{align*}
\widetilde{A}^0_2= 
		\begin{bmatrix}
		-4.1023+\delta_3& 0.2517+\delta_4\\
		0.5314&-2.4531
		\end{bmatrix},
		\widetilde{B}_2=
		\begin{bmatrix}
		1.103&0.5041\\
		0.7013+\gamma_3&0.1102+\gamma_4
		\end{bmatrix},
	\end{align*}
	and $ \delta_k, \gamma_k, k\in \underline 4$ are unknown disturbances.  Then, taking the structuring matrices $ 		
	D^0_1 = D^1_2= [0\ 1]^{\top}, \ D^0_2=D^1_1=[1\ 0]^{\top}  $ and $	E^0_1 = E^0_2 = E^1_1= E^1_2 = I_2$ (the identity matrix in $\R^{2\times 2}$) we can represent this perturbation model in the form \eqref{perturb-k}. 
	Since all subsystems are positive and, for characteristic quasi-polynomial matrices $P_k(s), k=1,2$,
	\begin{align*} 
	P_1(0)=A^0_1+\eta_1(0)= A^0_1 + B_1 &=
	\begin{bmatrix}
	-4.39&0.6038\\
	2.0418&-2.7702
	\end{bmatrix},\\
	P_2(0)=A^0_2+\eta_2(0)=A^0_2 + B_2 &=
	\begin{bmatrix}
	-2.9993&0.7558\\
	1.2327&-2.3429
	\end{bmatrix},
	\end{align*}	
	we can use \eqref{beta} to compute their real stability radii and get the upper bound in \eqref{estim_1norm} as
	 $$\min_{k\in \underline N} r_{\R}(A^0_k, \eta_k, \mathcal{D}_k, \mathcal{E}_k) = 2.0323. $$
By  \eqref{realstabrad} and \eqref{beta0} we get readily $\beta(\xi_0)=\min\limits_{k\in \underline{N}, k\in \underline n} \beta^k_i(\xi_0)=2.2196.$ Moreover,  clearly,  $M_0= \max\limits_{k=1,2}\{\|D^0_k\|\|E^0_k\|,\; \|D^1_k\|\|E^1_k\|\} =1.$

	\noindent	
	Therefore, by Theorem \ref{theorem4.1} we obtain the following lower bound for the stability radius of the time-delay switched linear system under consideration: 
	\begin{equation*}	
	r_{\R}(\mathcal{A},\Gamma,\mathcal{D},\mathcal{E})\geq 	\frac{1}{M_0} \sup\limits_{\xi\in \;\mathcal{G}_{\mathcal{A}, \Gamma}}\dfrac{\beta(\xi) }{ \|\xi\|} \geq \dfrac{\beta(\xi_0) }{ \|\xi_0\|}=0.4439.
	\end{equation*}
		
\noindent	It follows, by definition, that the  perturbed time-delay switched linear system  associated with \eqref{exampertDSw} is  exponentially stable, under arbitrary switching $\sigma\in \Sigma_+$,  for any disturbance ${\bf\Delta}$  satisfying $\|{\bf\Delta}\|=\max\limits_{k\in \underline 4}\{|\delta_k|+|\gamma_k|\}< 0.4439$. Thus, if we choose randomly disturbance parameters $\delta_k,\gamma_k, k\in \underline 4$ satisfying this condition and an arbitrary switching $\sigma \in \Sigma_+ $  then the trajectory of the perturbed switched system (simulated by MATLAB toolbox) decays exponentially to zero as $t$ tends to the infinity, as shown in Figure 1. Contrarily, if we choose disturbance parameters $\delta_1=5.012,\delta_2=1.001, \delta_3=0.2005, \delta_4=1.0102$ and $\gamma_1=2.002,\gamma_2=1.901, \gamma_3=2.012, \gamma_4=3.1023$ (so that $\|{\bf\Delta }\|_{max}\geq 2.0323$, a upper bound of the stability radius $r_{\R}({\mathcal A,\Gamma, \mathcal D, \mathcal E})$ ) and a periodic switching signal 
	\begin{equation*}
	\sigma(t)=
	\begin{cases}
	1 & \text{if} \;  lT\leq t< 2+lT,\\
	2 & \text{if}\; 2+lT\leq t<3+lT, \; l =0,1,\dots, 
	\end{cases}
	\end{equation*}
	then the corresponding trajectory  of the perturbed switched system does not go to zero when $t \rightarrow \infty$,  as shown in Fig 2.
		\begin{figure}[ht]
		\begin{center}
			\includegraphics[height=6cm,width=17cm]{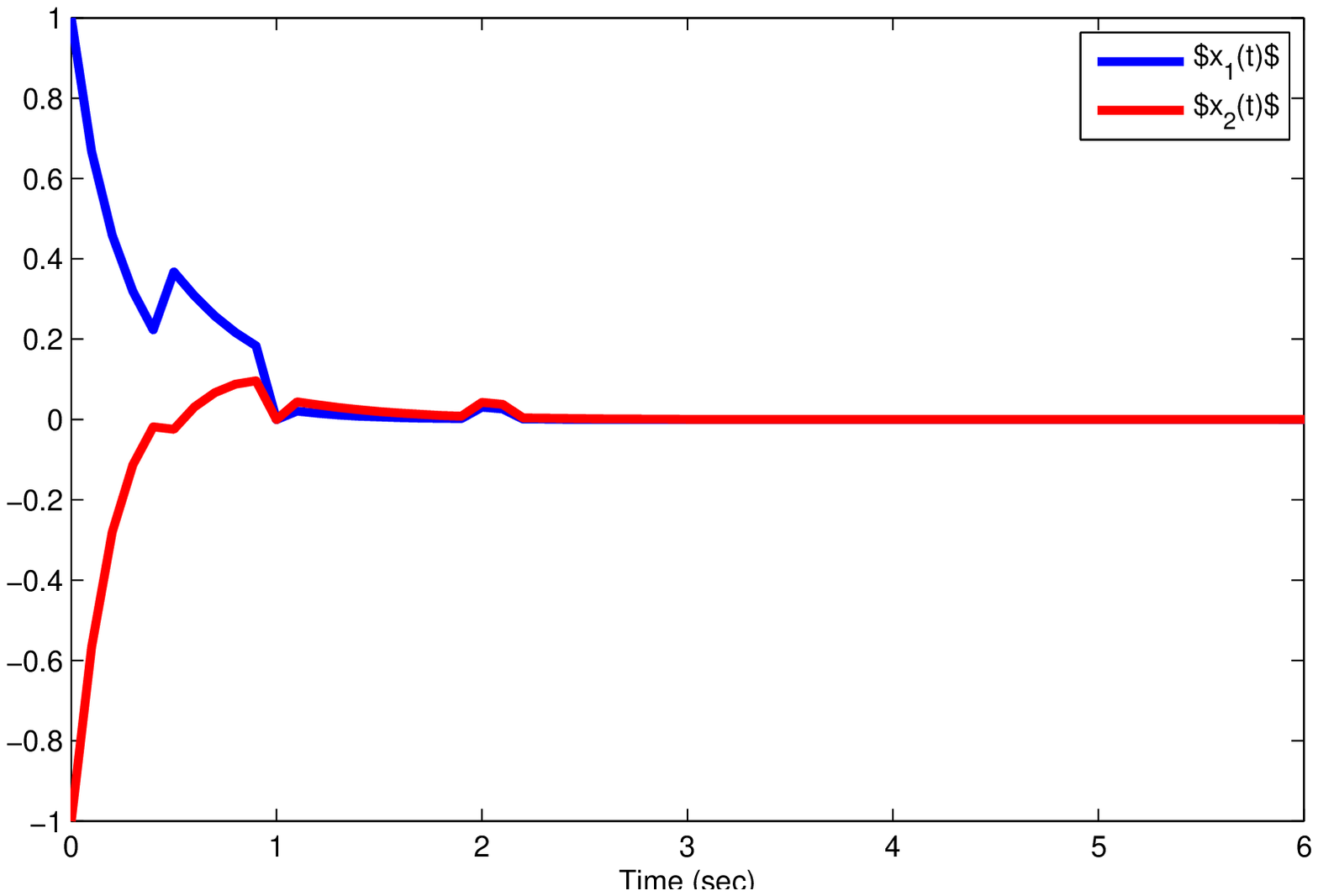}
		\end{center}
		\vskip -0.4cm		
		\caption{\textit{All perturbed systems are exponentially stable under arbitrary switching} if $\|{\bf\Delta }\|_{max}<\dfrac{1}{M_0} \sup\limits_{\xi\in \;\mathcal{G}_{\mathcal{A}, \Gamma}}\dfrac{\beta(\xi) }{ \|\xi\|}$}
	\end{figure}
	\vskip -1.0cm
	\begin{figure}[ht]
		\begin{center}
			\includegraphics[height=6cm,width=17cm]{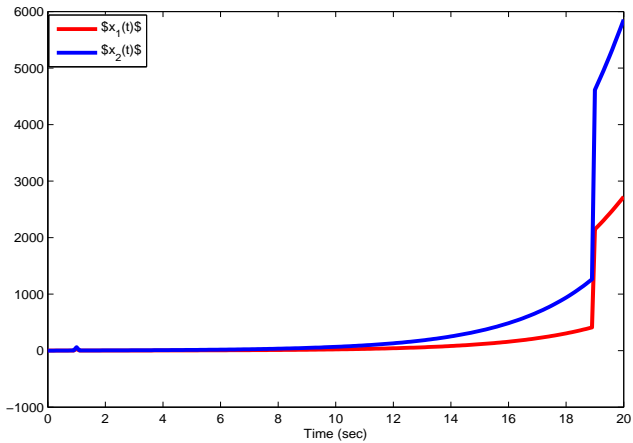}
		\end{center}
		\vskip -0.5cm
		\caption{\textit{Perturbed system may become unstable, for some switching law, if $\|{\bf\Delta }\|_{max}\geq \min_{k\in \underline N} r_{\R}(A^0_k, \eta_k, \mathcal{D}_k, \mathcal{E}_k)$ }}
	\end{figure}
	
 Below we will make use of  Corollary \ref{Corollary1} in Section 3  and  Corollary 6 in \cite{Son-Hinrichsen96} to get a more explicit  formula  for  computing this lower bound, under some additional assumptions.

\begin{theorem}
	\label{theorem4.3}
	Assume that the switched linear system \eqref{DSwFDS}-\eqref{DSwFDSk} is subject to affine perturbations of the form  \eqref{perturb-k}  with $r_k=r_0,$ $ q_k=q_0, s_k=s_0, p_k=p_0$ for all $k\in \underline N$. Assume, moreover, that  there exist a  Metzler matrix $A_0\in \R^{n\times n}$, an increasing matrix  function $\eta_0(\cdot)\in NBV([-h, 0],\mathbb{R}^{n\times n})$   and nonnegative matrices
	$D_0\in\R_+^{n\times r_0},E_0\in\R_+^{q_0\times n},$ $D_1\in\R_+^{n\times s_0}, E_1\in\R_+^{p_0\times n},$
	such that $A_0+\eta_0(0)$ is Hurwitz stable and  	
	\begin{align}
	&\mathcal{M}(A^0_k)\leq A_0, \ V(\eta_k)\leq V(\eta_0), \label{Con0} \\
	& |D^0_k|\leq D^0, |E^0_k|\leq E^0,|D^1_k|\leq D^1, |E^1_k|\leq E^1,\label{Con2}
	\end{align}
	\noindent 
	for all $ k\in \underline N$. Then the real stability radius of the time-delay switched linear system \eqref{DSwFDS}-\eqref{DSwFDSk} satisfies the following estimate
	\begin{equation}\label{case1normA0eta}
	r_{\R}({\mathcal A,\Gamma, \mathcal D, \mathcal E})\geq \dfrac{1}{M_0\|(A_0+\eta_0(0))^{-1}{\bf 1}_n \|},
	\end{equation}
	where $M_0 := \max\{\|D^0\|\|E^0\|; \|D^1\|\|E^1\|\}$ and ${\bf 1}_n := (1,1,\cdots, 1)^{\top}$.
\end{theorem}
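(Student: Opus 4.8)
The plan is to reduce everything to the lower bound already furnished by Theorem \ref{theorem4.1} and then to exhibit one explicit, near-optimal witness $\xi$ in the open cone $\mathcal{G}_{\mathcal{A},\Gamma}$, manufactured directly from the dominating positive system $(A_0,\eta_0)$. This keeps the argument self-contained, relying only on Theorem \ref{theorem4.1}, Lemma \ref{lemma2.1}, and the elementary observations behind Corollary \ref{Corollary1}.

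First I would record the structural consequences of the hypotheses. Since $\eta_0$ is increasing on $[-h,0]$ with $\eta_0(-h)=0$, one has $\eta_0(0)=V(\eta_0)$, exactly as in the proof of Corollary \ref{Corollary1}; hence $A_0+\eta_0(0)=A_0+V(\eta_0)$ is a Metzler matrix, which is Hurwitz by assumption. By Lemma \ref{lemma2.1}(iii) it is invertible with $-(A_0+\eta_0(0))^{-1}\geq 0$. Moreover \eqref{Con0} yields the entrywise domination $\mathcal{M}(A^0_k)+V(\eta_k)\leq A_0+\eta_0(0)$ for every $k\in\underline N$, which is the key inequality driving the estimate.

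The central step is the choice of witness. I would set $\xi^\ast:=-(A_0+\eta_0(0))^{-1}{\bf 1}_n$, so that $(A_0+\eta_0(0))\xi^\ast=-{\bf 1}_n\ll 0$. I would check $\xi^\ast\gg 0$: the nonnegative matrix $-(A_0+\eta_0(0))^{-1}$ is invertible, so none of its rows vanishes, and therefore each component of $\xi^\ast$, being a row-sum of nonnegative entries containing at least one strictly positive term, is strictly positive. Applying the domination inequality to the nonnegative vector $\xi^\ast$ gives $(\mathcal{M}(A^0_k)+V(\eta_k))\xi^\ast\leq(A_0+\eta_0(0))\xi^\ast=-{\bf 1}_n\ll 0$ for all $k$, so $\xi^\ast\in\mathcal{G}_{\mathcal{A},\Gamma}$ (confirming in passing that the cone is nonempty). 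Reading off \eqref{beta0}, this same inequality says $\beta^k_i(\xi^\ast)\geq 1$ for all $k,i$, hence $\beta(\xi^\ast)\geq 1$ by \eqref{beta}, and consequently
\begin{equation*}
\sup_{\xi\in\mathcal{G}_{\mathcal{A},\Gamma}}\frac{\beta(\xi)}{\|\xi\|}\;\geq\;\frac{\beta(\xi^\ast)}{\|\xi^\ast\|}\;\geq\;\frac{1}{\|\xi^\ast\|}\;=\;\frac{1}{\|(A_0+\eta_0(0))^{-1}{\bf 1}_n\|}.
\end{equation*}

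Finally I would reconcile the two quantities named $M_0$. The bound \eqref{estim_1norm} of Theorem \ref{theorem4.1} involves $\max_{k}\{\|D^0_k\|\|E^0_k\|;\|D^1_k\|\|E^1_k\|\}$, whereas the present statement uses $M_0=\max\{\|D^0\|\|E^0\|;\|D^1\|\|E^1\|\}$. Since $0\leq|D^0_k|\leq D^0$ and likewise for the other factors, monotonicity of the induced $\infty$-norm on nonnegative matrices together with $\||A|\|=\|A\|$ from \eqref{ineqvar} gives $\|D^0_k\|\leq\|D^0\|$, etc.; hence the former constant is $\leq M_0$, so its reciprocal is $\geq 1/M_0$. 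Chaining this with Theorem \ref{theorem4.1} and the displayed supremum estimate produces
\begin{equation*}
r_{\R}(\mathcal{A},\Gamma,\mathcal{D},\mathcal{E})\;\geq\;\frac{1}{M_0}\,\frac{1}{\|(A_0+\eta_0(0))^{-1}{\bf 1}_n\|},
\end{equation*}
which is \eqref{case1normA0eta}. I expect the one genuinely delicate point to be the verification that $\xi^\ast\gg 0$ rather than merely $\xi^\ast\geq 0$: strict positivity is exactly what membership in the \emph{open} cone $\mathcal{G}_{\mathcal{A},\Gamma}$ demands, and it rests on the invertibility (hence the absence of a zero row) of the nonnegative matrix $-(A_0+\eta_0(0))^{-1}$. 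Everything else is monotonicity bookkeeping.
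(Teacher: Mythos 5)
Your proof is correct and follows essentially the same route as the paper's: both hinge on the explicit witness $\xi_0=-(A_0+\eta_0(0))^{-1}{\bf 1}_n$, its strict positivity via Lemma \ref{lemma2.1}(iii), and the entrywise dominations \eqref{Con0}--\eqref{Con2}. The only difference is packaging --- you feed $\xi_0$ into the already-proven bound \eqref{estim_1norm} of Theorem \ref{theorem4.1} (observing $\beta(\xi_0)\geq 1$ and that the constant $\max_{k}\{\|D^0_k\|\|E^0_k\|;\|D^1_k\|\|E^1_k\|\}$ is at most the $M_0$ of the statement), whereas the paper re-runs the perturbation estimate on $\big(\mathcal{M}(\widetilde{A}^0_k)+V(\widetilde{\eta}_k)\big)\xi_0$ and invokes Lemma \ref{main_lemma} directly; both are sound.
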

\begin{proof}
In view of \eqref{Con0}, it follows from Corollary \ref{Corollary1} that the system \eqref{DSwFDS}-\eqref{DSwFDSk} is exponentially stable  under arbitrary switching. Further, by Lemma \ref{lemma2.1} (iii), the Metzler matrix  $A_0+\eta_0(0)$ is invertible and $-(A_0+\eta_0(0))^{-1} \geq 0$. This implies that $ -(A_0+\eta_0(0))^{-1}$ maps the interior of $\R^n_+$ into itself and, consequently, $ \xi_0:=-(A_0+\eta_0(0))^{-1}{\bf 1}_n  \gg 0  $. Thus, $(A_0+\eta_0(0))\xi_0 =-{\bf 1}_n \ll 0$. By \eqref{Con0} we have, for each $k\in \underline N$  and arbitrary disturbances $ \Delta_k\in \R^{r_0\times q_0},\ \delta_k(\cdot)\in NBV([-h,0],\R^{s_0\times p_0}),$ 
\begin{align}
\label{estim2}
\mathcal{M}(\widetilde{A}^0_k)+ V(\widetilde{\eta}_k) \leq \mathcal{M}(A^0_k)+V(\eta_k)+  |D^0_k\Delta_kE^0_k| +V(D^1_k\delta_kE^1_k)   
\leq (A_0+\eta_0(0))+ |D^0_k\Delta_0E^0_k| + V(D^1_k\delta_kE^1_k),
\end{align}
which implies 
\begin{align*} 
\big (\mathcal{M}(\widetilde{A}^0_k)+ V(\widetilde{\eta}_k)\big )\xi_0\leq -{\bf 1}_n + \big(|D^0_k\Delta_0E^0_k| + V(D^1_k\delta_kE^1_k)\big)\xi_0.
\end{align*}
Therefore, similarly as \eqref{estim10} and using \eqref{Con2}, we get    
\begin{align*}
\| \big(|D^0_k \Delta_kE^k_0| + V(D^1_k\delta_kE^1_k)\big)\xi_0\| \leq M_0\big(\|\Delta_k\| + \|\delta_k\|\big)\|(A_0+\eta_0(0))^{-1}{\bf 1}_n \|,\ k\in \underline N. 
\end{align*}
It follows  that for any disturbances $ {\bf\Delta }= \{[\Delta^0_k,\delta_k], k\in \underline N\}$ satisfying 
$$
\|{\bf\Delta}\|_{max}:=\max_{k\in \underline{N}}(\|\Delta_k\|+\|\delta_k\|)< \dfrac{1}{M_0\|(A_0+\eta_0(0))^{-1}{\bf 1}_n\|}
$$
we have, for each $k\in \underline N,\  
\big (\mathcal{M}(\widetilde{A}^0_k)+ V(\widetilde{\eta}_k)\big )\xi_0 \ll 0,$
with $\xi_0 \gg 0. $ This implies, by Theorem \ref{main_lemma}, that the perturbed switched system  is exponentially stable under arbitrary switching. Thus,  by the definition, the system stability radius $r_{\R}(\mathcal{A},\Gamma,\mathcal D,\mathcal E)$ satisfies the lower bound  \eqref{case1normA0eta}. The proof is complete.
\end{proof}

It is worth mentioning that Theorems \ref{theorem4.1},  \ref{theorem4.3} can be applied to give the lower bounds for stability radii of switched linear systems with discrete multi-delays and/or distributed delay of the form \eqref{discretedelaySwS}. In particular, for the class of positive switched systems with delay, the following consequence of Corollary \ref{Corollary1} and Theorem \ref{theorem4.3} gives explicit bounds of the unstructured stability radius for {\it a set of switched positive linear systems. }
\begin{corollary}
	\label{theorem4.5}
	Assume that the positive  linear system
	\begin{equation}
	\label{DFDS1}
	\dot{x}(t)=A^0_0x(t)+ \sum_{i=1}^{m}A^i_0x(t-h^i_0)+\int_{-h}^0B_0(\theta)x(t+\theta)d\theta,  t\geq 0,
	\end{equation}	
	(where $h^i_0\geq 0, h:= \max \{h^i_0, i\in \underline m\})$ is exponentially stable. Then, for any triples $(A^0_k , A^i_k, B_k(\cdot)) \in \R^{n\times n} \times \R^{n\times n}_+ \times  C([-h,0],\R^{n\times n}_+),$ for all $ i\in \underline m,  k\in \underline N, \theta \in [-h,0]$  satisfying
	\begin{align}\label{triple}
	\mathcal{M}(A^0_k)= A^0_k\leq A^0_0,\  A^i_k\leq A^i_0,\; B_k(\theta) \leq B_0(\theta),
	\end{align}
	the  switched positive linear system with delay 
	\begin{align}
	\label{SwDDS}
	\dot{x}(t)=A^0_{\sigma(t)}x(t)+\sum_{i=1}^{m}A^i_{\sigma(t)}x(t-h^i_0)+\int_{-h}^0B_{\sigma(t)}(\theta)x(t+\theta)d\theta, t\geq 0, \sigma\in \Sigma_+,
	\end{align}
	is exponentially stable under arbitrary switching. Moreover, for each of the switched systems \eqref{SwDDS} satisfying \eqref{triple}, the real unstructured stability radius under perturbations 
	\begin{align}
	\label{unstructpert}
	&A^0_k\rightarrow \widetilde A^0_k = A^0_k + \Delta^0 _k, A^i_k\rightarrow \widetilde A^i_k=A^i_k + \Delta ^i_k, \ i\in \underline m, \  k\in \underline N, \notag\\
	&	B_k(\theta)\rightarrow \widetilde B_k(\theta)= B_k(\theta) + C_k(\theta),\theta\in [-h,0], k\in \underline N,   
	\end{align}
	(with unknown $\Delta^0 _k, \Delta^i_k \in \R^{n\times n }, C_k(\cdot) \in C([-h,0], \R^n)$ ) 	satisfies the following estimates
	\begin{equation}
	\label{unstrurad}
	\dfrac{1}{\|H_0^{-1}\|}\ \leq \ 	r_{\R}({\mathcal A},\Gamma) \ \leq \   \dfrac{1}{\max_{k\in \underline N}\|H_k^{-1}\|}, 
	\end{equation}
	where $
	H_k :=  \sum_{i=0}^m  A^i_k + \int_{-h}^0B_k(\theta) d\theta , k=0,1,\ldots, N. $
\end{corollary}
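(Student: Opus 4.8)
The plan is to read this corollary as the specialization of Theorems \ref{theorem4.1} and \ref{theorem4.3} to positive switched systems under unstructured (identity) perturbations, and to collapse every relevant quantity onto the single Metzler matrix $H_k=\sum_{i=0}^m A^i_k+\int_{-h}^0 B_k(\theta)\,d\theta$. The first thing I would record is the identification $H_k=A^0_k+\eta_k(0)=\mathcal{M}(A^0_k)+V(\eta_k)$, where $\eta_k$ is the $NBV$ representation \eqref{eta_k} of the $k$-th subsystem: since the subsystems are positive, $A^0_k$ is Metzler (so $\mathcal{M}(A^0_k)=A^0_k$) and $\eta_k$ is increasing with $\eta_k(0)=V(\eta_k)=\sum_{i=1}^m A^i_k+\int_{-h}^0 B_k(\theta)\,d\theta$. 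Because system \eqref{DFDS1} is positive and exponentially stable, the standard characterization cited in Remark \ref{re0} gives that the Metzler matrix $H_0=A^0_0+\eta_0(0)$ is Hurwitz; hence by Lemma \ref{lemma2.1}(iii) it is invertible with $-H_0^{-1}\geq 0$, so $\xi_0:=-H_0^{-1}\mathbf{1}_n\gg 0$ and $H_0\xi_0=-\mathbf{1}_n\ll 0$.

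For the exponential-stability assertion, the key observation is that the domination hypotheses \eqref{triple} yield $H_k\leq H_0$ entrywise for every $k$: the Metzler parts satisfy $A^0_k\leq A^0_0$ and the nonnegative delay terms satisfy $A^i_k\leq A^i_0$ and $B_k(\theta)\leq B_0(\theta)$. Since $\xi_0\geq 0$, this gives $H_k\xi_0\leq H_0\xi_0=-\mathbf{1}_n\ll 0$ for all $k\in\underline N$, which is exactly condition \eqref{cond4}. Corollary \ref{positive_discretedelay} then delivers exponential stability of \eqref{SwDDS} under arbitrary switching, using the common vector $\xi_0$.

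The lower bound follows by applying Theorem \ref{theorem4.3} with $A_0=A^0_0$, $\eta_0$ as above, and the unstructured choice $D^0=E^0=D^1=E^1=I_n$, for which $M_0=1$ and conditions \eqref{Con0}--\eqref{Con2} hold; its conclusion reads $r_{\R}(\mathcal A,\Gamma)\geq 1/\|H_0^{-1}\mathbf{1}_n\|$. To match the stated form I would use the norm identity $\|H_0^{-1}\|=\|H_0^{-1}\mathbf{1}_n\|$, valid because $-H_0^{-1}\geq 0$: for a nonnegative matrix the induced $\infty$-norm (the maximal absolute row sum) coincides with the $\infty$-norm of its action on $\mathbf{1}_n$. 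For the upper bound I would invoke the estimate $r_{\R}(\mathcal A,\Gamma)\leq\min_{k\in\underline N} r_{\R}(A^0_k,\eta_k)$ from Theorem \ref{theorem4.1} and then evaluate each single-subsystem radius by the positive-system formula \eqref{unststabrad} of Remark \ref{remark3}, giving $r_{\R}(A^0_k,\eta_k)=1/\|(A^0_k+\eta_k(0))^{-1}\|=1/\|H_k^{-1}\|$; taking the minimum over $k$ converts this into $1/\max_{k}\|H_k^{-1}\|$.

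The individual steps are short, so the difficulty is organizational rather than deep. The one point that needs genuine care is the norm identity $\|H_0^{-1}\|=\|H_0^{-1}\mathbf{1}_n\|$, which is what bridges the $1/\|H_0^{-1}\mathbf{1}_n\|$ bound produced by Theorem \ref{theorem4.3} and the claimed $1/\|H_0^{-1}\|$ form; together with verifying that positivity collapses all matrices of variations $V(\eta_k)$ to the single nonnegative matrix $H_k=\mathcal{M}(A^0_k)+V(\eta_k)$. Once these identifications are in place, the corollary is an assembly of Corollary \ref{positive_discretedelay}, Theorem \ref{theorem4.3}, Theorem \ref{theorem4.1}, and the formula \eqref{unststabrad} of Remark \ref{remark3}.
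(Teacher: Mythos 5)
Your proposal is correct and follows essentially the same route as the paper: reduce to the $NBV$ form with $H_k=\mathcal{M}(A^0_k)+V(\eta_k)$, get stability from the common vector $\xi_0=-H_0^{-1}{\bf 1}_n\gg 0$ (the paper routes this through Corollary \ref{Corollary1} rather than Corollary \ref{positive_discretedelay}, but both reduce to the same verification), derive the lower bound from Theorem \ref{theorem4.3} with identity structuring matrices, and the upper bound from Theorem \ref{theorem4.1} together with \eqref{unststabrad}. The only cosmetic difference is that you use the exact identity $\|H_0^{-1}\|=\|H_0^{-1}{\bf 1}_n\|$ for the nonnegative matrix $-H_0^{-1}$, whereas the paper only needs the one-sided inequality $\|H_0^{-1}{\bf 1}_n\|\leq\|H_0^{-1}\|$; both are valid and yield the stated bound.
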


\begin{proof}
First, as  mentioned in Remark \ref{NAHS} and Corollaries \ref{main_discretedelay}, \ref{positive_discretedelay}, the positive systems \eqref{DFDS1} and \eqref{SwDDS} can be represented, respectively, in the form \eqref{DFDS}
and \eqref{DSwFDS}, with some increasing matrix functions $\eta_k, \eta_0 \in NBV([-h,0],\R^{n\times n}), k\in \underline N$. Moreover,  by \eqref{triple}, $V(\eta_k) \leq V(\eta_0)=\eta_0(0), \forall k\in \underline N$. Therefore, by Corollary \ref{Corollary1}, the positive switched system \eqref{SwDDS} is exponentially stable under arbitrary switching. Further, the unstructured perturbation model \eqref{unstructpert} can be represented in the form $ A^0_k\rightarrow \widetilde A^0_k = A^0_k + \Delta^0 _k,\ \ \eta_k(\cdot)  \rightarrow \widetilde \eta_k(\cdot) = \eta_k(\cdot) + \delta_k(\cdot),k\in \underline N$, where $\delta_k\in NBV([-h,0],\R^{n\times n}) $ is unknown perturbation defined as
\begin{equation}
\label{unstructpert2}
\delta_k(\theta)=\sum\limits_{i=1}^{m} \Delta_k^i \chi_{(-h_0^i, 0]}(\theta)+\int_{-h}^{\theta}C_k(s)ds,\  k\in \underline N.
\end{equation}
Since $A_0^0+\eta_0(0) = H_0$ and $ \|H_0^{-1 }{\bf 1}_n\|\leq \|H_0^{-1}\|= \sup\{\|H_0^{-1}\xi\|, \|\xi\|=1\}$ (noticing that $\|{\bf 1}_n\|_{\infty}=1$),  the lower bound in \eqref{unstrurad} is followed  from Theorem \ref{theorem4.3}. On the other side, it follows from \eqref{unststabrad} that, for each $k\in \underline N$, the real unstructured stability radius of the positive linear system 
\begin{equation}
\label{k}
\dot{x}(t)=A^0_kx(t)+ \int_{-h}^0d[\eta_k(\theta]x(t+\theta),\  t\geq 0, 
\end{equation}
is given by the formula $ r_{\R}(A^0_k,\eta_k)= \|(P_k(0))^{-1}\|^{-1} = \|H^{-1}_k\|^{-1},
$
where $P_k(s)$ is the characteristic quasi-polynomial of  \eqref{k}: $P_k(s)=sI-A^0_k-\int_{-h}^0e^{s\theta}d[\eta_k(\theta)] = sI - A^0_k-\sum_{i=1}^me^{-s h_0^i}A_k^i - \int_{-h}^0 e^{s\theta}B_k(\theta)d\theta$. The upper bounds in \eqref{unstrurad} is now immediate from Theorem \ref{theorem4.1}.   This completes  the proof. 
\end{proof}
Below we give a simple example to illustrate Corollary \ref{theorem4.5}.\\
{\bf Example 2.}
Consider the time-delay positive  switched linear  system of the form \eqref{SwDDS} in $\R^3$  with $N=2,h=2,$ $m=3,$ where the system's matrices 
\begin{align*}
A^0_1=
\begin{bmatrix}
-18&1&0\\
1&-15&1\\
1&1&-13
\end{bmatrix},\ 
A^0_2 =
\begin{bmatrix}
-19&1&0\\
1&-14&1\\
1&1&-15
\end{bmatrix},
\end{align*}
\begin{align*}
A^1_1=
\begin{bmatrix}
1&1&0\\
1&0&1\\
1&0&1
\end{bmatrix},
A^2_1 =
\begin{bmatrix}
0&1&0\\
1&0&1\\
0&1&2
\end{bmatrix},
A^3_1 =
\begin{bmatrix}
1&0&1\\
1&0&1\\
0&1&0
\end{bmatrix}, 	A^1_2=
\begin{bmatrix}
0&1&1\\
1&0&1\\
1&0&1
\end{bmatrix},A^2_2 =
\begin{bmatrix}
1&1&0\\
0&0&1\\
1&0&1
\end{bmatrix}, A^3_2 =
\begin{bmatrix}
1&1&1\\
0&1&1\\
0&1&0
\end{bmatrix}
\end{align*}
and, for $\theta \in [-2,0]$, 	
\begin{align*}
B_1 (\theta)=
\begin{bmatrix}
2&0&0\\
0&1&0\\
2&0& \theta+2
\end{bmatrix},
B_2 (\theta)& =
\begin{bmatrix}
\theta+2&0&1\\
1&1&0\\
\theta+2&0&0
\end{bmatrix},
\end{align*}
are subject to unstructured perturbations of the form \eqref{unstructpert}. Defining
\begin{align*}
A^0_0=
\begin{bmatrix}
	-18&1&0\\
	1&-14&1\\
	1&1&-13
	\end{bmatrix},
A^1_0=
\begin{bmatrix}
	1&1&1\\
	1&0&1\\
	1&0&1
	\end{bmatrix},
A^2_0 =
\begin{bmatrix}
	1&1&0\\
	1&0&1\\
	1&1&2
	\end{bmatrix}, A^3_0 =
\begin{bmatrix}
1&1&1\\
1&1&1\\
0&1&0
\end{bmatrix},B_0(\theta)=\begin{bmatrix}
2&0&1\\
1&1&0\\
2&0&\theta+2
\end{bmatrix},\text{for}\; \theta\in [-2,0],
\end{align*}

\noindent we have, clearly,  $0\leq A^i_k\leq A^i_0,\  k=0, 1,2, \ i=1,2,3 $ and $0\leq B_k(\theta)\leq B_0(\theta),\  k=1,2, \forall \theta \in [-2,0]$. Moreover, it is easy to verify that the positive linear system of the form \eqref{triple} (with these matrices $A^i_0, B_0(\theta), i=0,1,2,3$)  is exponentially stable. Therefore, by Corollary \ref{theorem4.5} we get the following bound for the real unstructured stability radius of the time-delay switched linear system under consideration: 
$$
0.6008=\dfrac{1}{\|H_0^{-1}\|}\leq r_{\R}({\mathcal A},\Gamma) \leq \dfrac{1}{\max_{k=1,2}\|H_k^{-1}\|}=3.1875.$$			

\noindent Then, similarly as Example 1, it is easy to show, by numerical simulation, that the perturbed switched system of the form 
	\begin{align}
	\label{SwDDS-1}
	\dot{x}(t)=\widetilde A^0_{\sigma(t)}x(t)+\sum_{i=1}^{3}\widetilde A^i_{\sigma(t)}x(t-2)+\int_{-2}^0\widetilde B_{\sigma(t)}(\theta)x(t+\theta)d\theta, t\geq 0, \sigma\in \Sigma_+,
\end{align}
\noindent (with system's matrices being defined by \eqref{unstructpert}) is  exponentially stable under arbitrary switching $\sigma \in \Sigma_+$  and for any perturbations satisfying $\max_{k=1,2}\big\{ \sum_{i=0}^3\|\Delta^i_k\|+ \int_{-2}^0\|C_k(\theta)\|d\theta\big\}<0.6008$.
\section{Concluding Remarks}
We have presented a unified approach to study the robustness of exponential stability of time-delay  switched systems, under arbitrary switching,  described by linear functional differential equations, by introducing  the notion of  stability radius w.r.t. structured affine perturbations of the subsystem's matrices. As the main contribution,  we obtain some formulas to estimate this radius for some classes of time-delay switched linear systems, under the assumption on the existence of a common linear copositive Lyapunov function (LCLF) of the upper bounding linear positive subsystems. In the case of positive switched linear systems with discrete multiple time delays and/or distributed time delay, our general results  yield easily computable formulas to estimate the system's stability radius. To the best of our knowledge, such kind of results for switched systems have been obtained for the first time in this paper. 
It is our belief that the  approach developed in this paper is applicable to deal with similar problems but under less restrictive assumptions (e.g. when the nominal time-delay subsystems have  a common quadratic Lyapunov - Krasovskii functional and are subjected to more general types of parameter perturbations) which would hopefully bring about better and less conservative estimates for the system's stability radius. The other possibility of improving the results, as mentioned in Remark \ref{conservatism}, is to relax the assumption \eqref{cond2} on the existence of common LCLF, by using, instead of $\Sigma_+$, the more restrictive class of switching signals with average dwell time (ADT), as done in \cite{geromel, Qi, Dong2015} and in our recent work \cite{SonNgocNAHS}. These problems are the topics of our future works.

\section*{Acknowledgments}
\noindent  This work was supported partly by VAST (Vietnam Academy of Science and Technology) through the project QTRU03.02 /18-19. The paper is completed when  the first author spent a research stay at the Vietnam Institute for Advanced Studies in Mathematics (VIASM).












\end{document}